\newtheorem{lemma}{Lemma}[section]
\newtheorem{proposition}{Proposition}[section]
\newtheorem{example}{Example}[section]
\newtheorem{remark}{Remark}[section]
\def\mP{{\mathbb P}}
\def\mQ{{\mathbb Q}}
\def\mC{{\mathbb C}}
\def\mZ{{\mathbb Z}}
\def\cO{{\mathcal O}}
\def\n{\noindent}
\def\m{\medskip}
\def\log{\operatorname{log}}
\def\cK{{\mathcal K}}
\def\log{\operatorname{log}}
\begin{document}

\title[Construction of surfaces of general type]
{Construction of surfaces of general type from elliptic surfaces
via $\mathbb{Q}$-Gorenstein smoothing }

\author{JongHae Keum}

\address{Korea Institute for Advanced Study, 207-43 Cheongnyangni-dong,
         Seoul 130-722, Korea}

\email{jhkeum@kias.re.kr}

\author{Yongnam Lee}

\address{Department of Mathematics, Sogang University,
         Sinsu-dong, Mapo-gu, Seoul 121-742, and
         Korea Institute for Advanced Study, 207-43 Cheongnyangni-dong,
         Seoul 130-722, Korea}

\email{ynlee@sogang.ac.kr}

\author{Heesang Park}

\address{Korea Institute for Advanced Study, 207-43 Cheongnyangni-dong,
         Seoul 130-722, Korea}

\email{hspark@kias.re.kr}

\date{Nov 14, 2010}

\subjclass[2000]{Primary 14J29; Secondary 14J10, 14J17, 53D05}

\keywords{$\mathbb{Q}$-Gorenstein smoothing, surface of general type, elliptic surface, Enriques surface}

\begin{abstract}
We present methods to construct interesting surfaces of general
type via $\mathbb{Q}$-Gorenstein smoothing of a singular surface
obtained from an elliptic surface. By applying our methods to
special Enriques surfaces, we construct new examples of a minimal
surface of general type with $p_g=0$,
$\pi_1=\mathbb{Z}/2\mathbb{Z}$, and $K^2\le 4$.
\end{abstract}

\maketitle


\section{Introduction}
\label{sec-1}

A simply connected elliptic surface $S$ with a section is called an
$E(n)$ surface if $\chi({\mathcal O}_S)=n>0$ and $c_1^2(S)=0$. An
$E(n)$ surface has topological Euler characteristic $c_2=12n$. All
$E(n)$ surfaces are diffeomorphic for fixed $n$, and an $E(n)$
surface is symplectically isomorphic to the fiber sum of $n$ copies
of a rational elliptic surface $E(1)$. Recall that an $E(1)$ surface
is obtained from $\mP^2$ by blowing up the base points of a pencil
of cubics, and an $E(2)$ surface is an elliptic K3 surface with a
section.

Recently, Jongil Park and the second named author constructed a
simply connected minimal surface of general type with $p_g=0$ and
$K^2=2$ via $\mQ$-Gorenstein smoothing of a singular rational
surface \cite{LP1}. This singular rational surface is obtained by
contracting linear chains of rational curves in a blow-up of an
$E(1)$ surface with singular fibers of special type. The other
constructions of surfaces of general type with $p_g=0$ via
$\mQ$-Gorenstein smoothing given in \cite{LP2}, \cite{PPS1},
\cite{PPS2}, \cite{PPS3}, use different $E(1)$ surfaces, but all
employ the same arguments as in \cite{LP1} to prove the vanishing
$H^2(T^0_X)=0$, which is a key ingredient to guarantee the
existence of a $\mQ$-Gorenstein smoothing.

We remark that several statements in \cite{LP1} can be generalized
to the case of $E(n)$ surfaces and to the case of elliptic
surfaces without a section.

\m

\n{\bf Question.}  Is it possible to construct an interesting
complex surface via $\mQ$-Gorenstein smoothing of a singular
surface obtained by contracting linear chains of rational curves
in a blow-up of an $E(n)$ surface with $n\ge 2$, or of an
 Enriques surface?

\m In this paper, we will treat mainly the case of Enriques
surfaces. Since an Enriques surface has multiple fibres, the
method of \cite{LP1} for the case of an $E(1)$ surface cannot be
applied directly to prove the existence of a global
$\mQ$-Gorenstein smoothing. We overcome this difficulty by passing
to the K3-cover (an $E(2)$ surface) and then by showing that the
obstruction space of the corresponding singular surface has
trivial invariant part under the covering involution.

Using some special Enriques surfaces, we are able to construct
minimal surfaces of general type with $p_g=0$, $\pi_1=\mZ/2\mZ$
and $K^2=1, 2, 3, 4$. Each of our examples has ample canonical
class, i.e., contains no $(-2)$-curve. There have been constructed
several examples of a minimal surface of general type with
$p_g=0$, $\pi_1=\mZ/2\mZ$ and $K^2=1$, by Barlow \cite{Bar}, by
Inoue \cite{Inoue} and recently by Bauer and Pignatelli \cite{BP}.
Each of these examples contains a $(-2)$-curve. We do not know if
our example with $K^2=1$ is deformation equivalent to one of these
examples. As for examples with $p_g=0$, $\pi_1=\mZ/2\mZ$ and
$K^2\ge 2$, the existence is known only for $K^2=3$. In fact,
Cartwright and Steger \cite{CS} recently found examples with
$p_g=0$, $\pi_1=\mZ/2\mZ$ and $K^2=3$ by taking the minimal
resolution of the quotient of a fake projective plane by an order
3 automorphism. By \cite{K08}, the quotient of a fake projective
plane by an order 3 automorphism has $p_g=0$, $K^2=3$, and 3
singular points of type $\dfrac{1}{3}(1,2)$. They computed the
fundamental groups of all possible quotients to find that some
quotients have $\pi_1=\mZ/2\mZ$. Our example with $K^2=3$ is
different from any of their examples, but we do not know if ours
is deformation equivalent to one of theirs. A minimal surface of
general type with $p_g=0$, $K^2=2$ and $H_1=\mZ/2\mZ$ was
constructed in \cite{LP2}, but it is not known if it actually has
$\pi_1=\mZ/2\mZ$. Table 1 of \cite{BCP} gives a list of minimal
surfaces of general type with $p_g = 0$ and $K^2\le 7$ available
in the literature.

We remark that our method cannot produce minimal surfaces of
general type with $p_g = 0$ and $K^2\ge 5$. The reason is that
singular surfaces $X$ appearing in our construction satisfy the
vanishing $H^2(T^0_X)=0$, which we need to ensure the existence of
a global $\mQ$-Gorenstein smoothing. By the upper semi-continuity,
the condition $H^2(T^0_X)=0$ implies $H^2(X_t, T_{X_t})=0$ for a
general member $X_t$ of a $\mQ$-Gorenstein smoothing. Since
\[h^1(X_t, T_{X_t})-h^2(X_t, T_{X_t})=10\chi(\cO_{X_t})-2K_{X_t}^2,\]
the dimension of the deformation space of $X_t$ is equal to
$h^1(X_t, T_{X_t})=10-2K_{X_t}^2$, and hence there is no
nontrivial deformation of $X_t$ if $K_{X_t}^2\ge 5$.

\m The case of $E(n)$ with $n\ge 4$ were worked out in \cite{LP3}
and \cite{Lee}.  The case of $E(3)$ will be treated in the last
section. A key ingredient in the case of $E(n)$ is to show that
there is a $\mQ$-Gorenstein smoothing of singular points
simultaneously even if there is an obstruction to $\mQ$-Gorenstein
smoothing for each singular point.

\m
Throughout this paper, we follow Kodaira's notation for singular
fibers of elliptic fibration \cite{Kodaira}, and we work over the
field of complex numbers.


\section{The case of elliptic K3 surfaces with a section}
\label{sec-2}

In this section, we give a sufficient condition for the existence
of a $\mQ$-Gorenstein smoothing of a singular surface obtained
from a K3 surface with a section.  This will be used in our main
construction in Section 3.

\m
Let $Y$ be a K3 surface admitting an elliptic fibration with a
section whose singular fibers are either reducible or of type
$\mathrm{I_1}$ (nodal). Assume that it has a fibre of type
$\mathrm{I_1}$, and let $F_Y$ be such a fibre. Let $\pi: Z\to Y$
be the blow-up at the node of $F_Y$. Let $F$ be the proper
transform of $F_Y$ and $E$ the exceptional curve, i.e. the total
transform of $F_Y$ is $F+2E$. Let $S_1, \ldots, S_\ell\subset Z$
be the proper transform of sections in $Y$. They are
$(-2)$-curves, not meeting $E$. Let $G_1,\ldots, G_k$ be
$(-2)$-curves in the union of singular fibers. Assume that the
support of $\cup_{i=1}^k G_i$ does not contain the support of a
whole fiber, and that the sum $S_1+\cdots +
S_\ell+G_1+\cdots+G_k+F+E$ is a normal crossing divisor.

\begin{proposition}
\label{prop 2.1} With the assumptions and the notation as above,
assume further that $S_1,\ldots, S_\ell$, $G_1, \ldots, G_k$, $F,
E$
are numerically independent in the Picard group of $Z$. Then\\
$H^2(Z, T_Z(-\log(S_1+\cdots + S_\ell+G_1+\cdots+G_k+F)))=0$.
\end{proposition}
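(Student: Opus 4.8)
The plan is to pass to the dual problem by Serre duality and then exploit the Hodge-theoretic meaning of residues of logarithmic forms, where the numerical independence hypothesis enters as linear independence of the classes of the curves in $H^2(Z,\mC)$. Write $D=S_1+\cdots+S_\ell+G_1+\cdots+G_k+F$. Since $\pi\colon Z\to Y$ is the blow-up of the K3 surface $Y$ at a single point, $K_Z\cong\cO_Z(E)$. By Serre duality on the surface $Z$,
\[
H^2\bigl(Z,\,T_Z(-\log D)\bigr)\cong H^0\bigl(Z,\,\Omega^1_Z(\log D)\otimes\cO_Z(E)\bigr)^{\vee},
\]
so it suffices to prove $H^0(Z,\Omega^1_Z(\log D)(E))=0$.

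First I would dispose of the untwisted statement $H^0(Z,\Omega^1_Z(\log D))=0$. From the residue sequence $0\to\Omega^1_Z\to\Omega^1_Z(\log D)\xrightarrow{\mathrm{Res}}\bigoplus_i\cO_{D_i}\to 0$ and the fact that $Y$ (hence $Z$) has irregularity $0$, so $H^0(Z,\Omega^1_Z)=H^0(Y,\Omega^1_Y)=0$, every global log $1$-form is closed and is determined by its residues. Via the standard Gysin sequence for the mixed Hodge structure on $Z\setminus D$, these residues realize an element of $\ker\bigl(\bigoplus_i\mC\,[D_i]\xrightarrow{\gamma}H^2(Z,\mC)\bigr)$, where $\gamma$ sends $(c_i)$ to $\sum_i c_i[D_i]$. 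The numerical independence of $S_1,\dots,S_\ell,G_1,\dots,G_k,F$ (and $E$) means exactly that these classes are linearly independent, so $\gamma$ is injective, all residues vanish, and the form is holomorphic, hence zero. Running the same argument with $E$ adjoined to the log divisor, and using the independence of $[E]$ as well, yields in addition $H^0(Z,\Omega^1_Z(\log(D+E)))=0$.

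The genuine difficulty is the twist by $E=K_Z$. I would restrict to $E$ using
\[
0\to H^0\bigl(\Omega^1_Z(\log D)\bigr)\to H^0\bigl(\Omega^1_Z(\log D)(E)\bigr)\to H^0\bigl(E,\,\Omega^1_Z(\log D)(E)|_E\bigr).
\]
Only $F$ meets $E$, in the two points $F\cap E$ (so $F\cdot E=2$), while the $S_i$ and $G_j$ are disjoint from $E$. Hence near $E$ the log divisor is just $F$, and the conormal/residue filtration gives $0\to N_E^{\vee}\to\Omega^1_Z(\log D)|_E\to\Omega^1_E(\log(F\cap E))\to 0$, i.e. $0\to\cO_E(1)\to\Omega^1_Z(\log D)|_E\to\cO_E\to 0$ using $E^2=-1$ and $\Omega^1_{\mP^1}(\log(\text{2 points}))\cong\cO_E$; this splits because $\mathrm{Ext}^1(\cO_E,\cO_E(1))=H^1(\cO_E(1))=0$. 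Twisting by $\cO_Z(E)|_E=\cO_E(-1)$ gives $\Omega^1_Z(\log D)(E)|_E\cong\cO_E\oplus\cO_E(-1)$, whose space of sections is $1$-dimensional and lives in the conormal summand. Thus $H^0(Z,\Omega^1_Z(\log D)(E))$ is at most one-dimensional, the putative extra class being a form with a genuine logarithmic pole $\sim c\,\tfrac{ds}{s}$ along $E$.

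The main obstacle, and the last step, is to kill this one remaining dimension. I expect the surviving section, which a priori could carry a non-logarithmic contribution of type $\tfrac{1}{s}\tfrac{dt}{t}$ coming from the twist, to have in fact only a logarithmic pole along $E$; verifying this is a local computation at the two points $E\cap F$. Granting it, the section extends to a global element of $\Omega^1_Z(\log(D+E))$, and that space was shown to vanish in the second paragraph precisely because $[E]$ is numerically independent of the remaining classes. Therefore the extra dimension disappears, $H^0(Z,\Omega^1_Z(\log D)(E))=0$, and the proposition follows. The delicate point to watch is exactly the control of the pole order along $E$ after the twist by $K_Z$; equivalently, showing that the residue-type invariant attached to $E$ vanishes, which is forced by the inclusion of $E$ in the independence hypothesis.
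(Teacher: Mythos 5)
Your proof is correct and takes essentially the same route as the paper's: Serre duality reduces the claim to $H^0(Z,\Omega^1_Z(\log D)(E))=0$ with $E=K_Z$, the vanishing $H^0(Z,\Omega^1_Z(\log(D+E)))=0$ follows from the residue sequence together with injectivity of the Chern class map (this is where numerical independence of all the curves, including $E$, enters), and the twist by $E$ is killed because the obstruction sheaf along $E$ is $\cO_E(-1)$, which has no sections. The only difference is organizational: where you restrict to $E$, split off the conormal summand, and promote the surviving section to a form with log poles along $D+E$, the paper computes the cokernel $\cK\cong\cO_E(-1)$ of the inclusion $\Omega^1_Z(\log(D+E))\hookrightarrow\Omega^1_Z(\log D)(E)$ via a three-by-three diagram and the snake lemma; these are the same sheaf, and the "local computation" you defer (vanishing of the quotient component of the restriction forces the coefficient of the non-log term to be divisible by the equation of $E$, hence a genuine logarithmic pole) is exactly the content of that diagram chase, so your argument does close up.
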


\begin{proof} We denote by $C$ the sum $S_1+\cdots + S_\ell+G_1+\cdots+G_k$, i.e.,  $C:=
S_1+\cdots + S_\ell+G_1+\cdots+G_k.$ By the Serre duality, it is
enough to show $H^0(Z, \Omega_Z^1(\log (C+F)(K_Z)))=0$. Note
that the canonical divisor $K_Z=E$, since $K_Y=\cO_Y$. By an
abuse of notation, we abbreviate $\cO_{S_1}\oplus\cdots \oplus
\cO_{S_\ell}\oplus\cO_{G_1}\oplus\cdots\oplus\cO_{G_k}$ to
$\cO_C$.

The proof uses the following commutative diagram and the snake
lemma.
\[\begin{array}{ccccccc}
& 0 & & 0& &  & \\
& \downarrow & & \downarrow & &  & \\
0\to & \Omega_Z^1 & \to & \Omega_Z^1(E) & \to &
\Omega_Z^1(E)\otimes\cO_E &\to 0 \\
& \downarrow & & \downarrow & & \downarrow & \\
0\to & \Omega_Z^1(\log(C+F+E)) & \to & \Omega_Z^1(\log(C+F))(E) &
\to & \cK &\to 0 \\
& \downarrow & & \downarrow & & \downarrow & \\
0\to\cO_E\to & \cO_C\oplus\cO_F\oplus\cO_E & \to &
\cO_C(E)\oplus\cO_F(E) & \to & \mC_p\oplus\mC_q &\to
0\\
& \downarrow & & \downarrow & & \downarrow & \\
& 0 & & 0& & 0 &
\end{array}\]
where $\cK$ is the cokernel of the map from
$\Omega_Z^1(\log(C+F+E))$ to $\Omega_Z^1(\log(C+F))(E)$ and $p, q$
are intersection points of $F$ and $E$. We have the short exact
sequence
\[0\to \cO_E\to \Omega_Z^1(E)\otimes\cO_E \to \Omega_E^1(E)\to 0.\]
Then by the snake lemma, we have the short exact sequence
\[0\to \Omega_E^1(E)\to \cK\to \mC_p\oplus\mC_q\to 0,\]
and we get $\cK=\cO_E(-1)$. Since $H^0(Z, \Omega_Z^1)=0$ and the
first Chern class map from $H^0(\cO_C\oplus\cO_F\oplus\cO_E)$ to
$H^1(Z, \Omega_Z^1)$ is injective by the assumption of $C$, we get
the vanishing $H^0(Z, \Omega_Z^1(\log(C+F+E)))=0$. And then we have
the vanishing $H^0(Z, \Omega_Z^1(\log(C+F))(E))=0$.
\end{proof}

We can keep the vanishing of the cohomology under the process of
blowing up at a point by the following standard fact: Let $V$ be a
nonsingular surface and let $D$ be a simple normal crossing
divisor in $V$. Let $f: V'\to V$ be the blow-up of $V$ at a point
p on $D$. Let  $D'$ be the reduced divisor of the total transform
of $D$. Then $h^2(V',T_{V'}(-\log D'))=h^2(V, T_V(-\log D))$.
Therefore, we get

\begin{proposition}
\label{prop 2.2} With the same assumptions and notation as in
Proposition 2.1, we denote $D_Z:=\sum_{i=1}^\ell S_i +\sum_{i=1}^k
G_i+F$. Let $\tau': Z'\to Z$ be a successive blowing-up of points
on $D_Z$. Let $D_{Z'}$ be the reduced divisor of the total
transform of $D_Z$ or the reduced divisor of the total transform
of  $D_Z$ minus some $(-1)$-curves. Then $H^2(Z', T_{Z'}(-\log
D_{Z'}))=0.$
\end{proposition}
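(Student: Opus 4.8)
The plan is to prove Proposition 2.2 by induction on the number of blow-ups, reducing everything to the single-blow-up invariance fact stated immediately before the proposition together with the vanishing established in Proposition 2.1. Let me think about how the pieces fit.

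First, I would set up the base case. Proposition 2.1 gives $H^2(Z, T_Z(-\log(C+F)))=0$ where $C=S_1+\cdots+S_\ell+G_1+\cdots+G_k$, so with the notation $D_Z=C+F$ we have exactly $H^2(Z,T_Z(-\log D_Z))=0$.

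Second, I would run the induction using the quoted standard fact. Write $\tau'$ as a composition $Z'=Z_m\to Z_{m-1}\to\cdots\to Z_0=Z$ of blow-ups, each centered at a point lying on the (reduced total transform of the) boundary divisor carried along from the previous stage. At each stage the standard fact gives $h^2(Z_{j},T_{Z_{j}}(-\log D_{j}))=h^2(Z_{j-1},T_{Z_{j-1}}(-\log D_{j-1}))$, where $D_j$ is the reduced total transform of $D_{j-1}$. Chaining these equalities from the base case yields $h^2(Z',T_{Z'}(-\log D'))=0$ for the \emph{full} reduced total transform $D'$ of $D_Z$.

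The remaining point, and the one subtlety worth isolating, is the clause allowing $D_{Z'}$ to be the full reduced total transform \emph{minus some $(-1)$-curves}. So the main obstacle is to show that removing an exceptional $(-1)$-curve $L$ from the log boundary does not disturb the vanishing. I would handle this by the residue (adjunction) sequence
\[
0\to T_{Z'}(-\log D_{Z'})\to T_{Z'}(-\log(D_{Z'}+L))\to N_{L}\to 0,
\]
where $N_L=\cO_L(L\cap(D_{Z'}))$ is the normal-type sheaf supported on $L\cong\mP^1$. Passing to cohomology, the vanishing $H^2(Z',T_{Z'}(-\log(D_{Z'}+L)))=0$ surjects onto $H^2(Z',T_{Z'}(-\log D_{Z'}))$ modulo $H^2(L,N_L)$; but $N_L$ is a line bundle on a curve, so $H^2(L,N_L)=0$ and the surjection forces $H^2(Z',T_{Z'}(-\log D_{Z'}))=0$ as well. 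Iterating over the finitely many $(-1)$-curves to be removed completes the argument. The only care needed is to verify that at each blow-up the center indeed lies on the current boundary, so that the standard invariance fact applies and the divisors $D_j$ remain simple normal crossing; this is guaranteed by the hypothesis that $\tau'$ blows up points on $D_Z$.
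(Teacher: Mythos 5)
Your overall strategy coincides with the paper's: the paper deduces Proposition 2.2 from Proposition 2.1 (the base case) together with the quoted standard fact applied once per blow-up, and it does not argue the ``minus some $(-1)$-curves'' clause explicitly (that part is inherited from the arguments of \cite{LP1}). So your decision to isolate that clause and prove it via a residue sequence is exactly the right supplement, and your induction scheme for the full reduced total transform is the paper's argument verbatim.

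There is, however, one genuine error in your write-up: the displayed residue sequence is written in the wrong direction, and taken literally it yields the wrong implication. Enlarging the boundary makes the log tangent sheaf \emph{smaller}, not larger: a vector field tangent to $D_{Z'}+L$ is in particular tangent to $D_{Z'}$, so the correct sequence is
\[
0\to T_{Z'}(-\log(D_{Z'}+L))\to T_{Z'}(-\log D_{Z'})\to N_L\to 0,
\]
with quotient $N_L=\mathcal{O}_L(L)\cong\mathcal{O}_{\mathbb{P}^1}(-1)$ supported on $L$ (not $\mathcal{O}_L(L\cap D_{Z'})$, though this identification is immaterial for the argument). With the sequence as you wrote it, the long exact sequence produces a surjection $H^2(T_{Z'}(-\log D_{Z'}))\to H^2(T_{Z'}(-\log(D_{Z'}+L)))$, i.e., vanishing for the \emph{smaller} boundary would imply vanishing for the larger one --- the reverse of what you need, so the proof as displayed does not close. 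With the corrected sequence, the tail
\[
H^2(T_{Z'}(-\log(D_{Z'}+L)))\to H^2(T_{Z'}(-\log D_{Z'}))\to H^2(N_L)=0
\]
gives precisely the surjection you describe in words, and the conclusion follows; iterating over the finitely many deleted $(-1)$-curves finishes the proof. In short, your verbal argument is the correct one and matches the standard fix; only the roles of the subsheaf and the ambient sheaf in the display need to be swapped.
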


Note that an $E(2)$ surface can be constructed as a double cover of
an $E(1)$ surface. By using the double covering $E(2)\to E(1)$,
together with the methods developed in \cite{LP1}, one can produce
simply connected minimal surfaces of general type with $p_g=1$ and
$q=0$. For example, such surfaces with $1\le K^2\le 6$ are
constructed in \cite{PPS4}.


\section{Construction of surfaces of general type
with $p_g=q=0$ and $\pi_1=\mZ/2\mZ$ from Enriques surfaces}
\label{sec-3}

Recall that every Enriques surface admits an elliptic fibration,
and every elliptic fibration on an Enriques surface has exactly
two multiple fibers, both of which have multiplicity 2. Every
smooth rational curve on an Enriques surface is a $(-2)$-curve.

A singular fibre $F$ of an elliptic fibration is said to be of {\it
additive type}, if the group consisting of simple points of $F$
contains the additive group $\mC$, and of {\it multiplicative type},
if the group consisting of simple points of $F$ contains the
multiplicative group $\mC^*$. In the Kodaira's notation for singular
fibers of elliptic fibration, a fibre of type $\mathrm{II, III, IV,
IV^*, III^*, II^*, I_n^*} (n\geq 0)$ is of additive type, and a
fibre of type $\mathrm{I_n} (n\geq 1)$ is of multiplicative type. An
additive type fibre is always a non-multiple fibre, and a fibre of
multiplicity $m\ge 2$ must be of type $m\mathrm{I_n} (n\geq 0)$,
i.e., its reduced structure must be of type $\mathrm{I_n} (n\geq
0)$.

Let $W$ be an Enriques surface and $f:W\to \mP^1$ be an elliptic
fibration on it. A smooth rational curve on $W$ is called a {\it
$2$-section}, if it intersects a fibre of $f$ with multiplicity 2.
Let $V$ be the K3 cover of $W$, and $g:V\to \mP^1$ the elliptic
fibration induced by the elliptic fibration $f: W\to \mP^1$, i.e.,
the normalization of the fibre product of $f: W\to \mP^1$ and the
double cover $\mP^1\to \mP^1$ branched at the base points of the
two multiple fibres of $f$. A fibre of $f$ is non-multiple iff it
splits into two fibres of $g$ of the same type. A multiple fibre
of $f$ of type $2\mathrm{I_n}$ does not split and gives a fibre of
$g$ of type $\mathrm{I_{2n}}$.

\begin{lemma}\label{bi} If an elliptic fibration on an Enriques surface has a
$2$-section $S$, then for each fibre $F$, $S$ passes through $F$ in
two distinct smooth points, if $F$ is not a multiple fibre; one
smooth point of $F_{red}$, if $F=2F_{red}$ is a multiple fibre.
\end{lemma}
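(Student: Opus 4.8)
The plan is to reduce the whole statement to an analysis of the degree-two morphism $f|_S\colon S\to\mP^1$. Since $S$ is a smooth rational curve and $K_W$ is numerically trivial, adjunction gives $S^2=2g(S)-2=-2$, so $S$ is a $(-2)$-curve; in particular $S\cdot F=2$ by the definition of a $2$-section, and since $S\cdot F\neq 0$ the curve $S$ is not contained in any fibre. Hence $f$ restricts to a finite morphism $f|_S\colon S\cong\mP^1\to\mP^1$ whose degree equals $S\cdot F=2$. For a point $t\in\mP^1$ the divisor $(f|_S)^*(t)=i^*f^*(t)$ (with $i\colon S\hookrightarrow W$ the inclusion) records exactly how $S$ meets the fibre over $t$, counted with multiplicity, so it suffices to understand the fibres of this double cover.

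First I would apply Riemann--Hurwitz to $f|_S$. A degree-two map $\mP^1\to\mP^1$ satisfies $-2=2(-2)+\sum_{p}(e_p-1)$, so $\sum_p(e_p-1)=2$; as each $e_p\le 2$ this forces exactly two ramification points, both with $e_p=2$. The core of the argument is then to identify these two ramification points with the two multiple fibres.

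Next I would treat the multiple fibres. Writing $F_0=2F_{red}$ for a multiple fibre, the relation $2F_{red}\sim F$ (both are pullbacks of points of $\mP^1$, which are linearly equivalent) together with $S\cdot F=2$ gives $S\cdot F_{red}=1$. A single intersection of total multiplicity one forces a transversal meeting at a smooth point $p$ of $F_{red}$, since at any point $q$ one has the inequality $(S\cdot F_{red})_q\ge \operatorname{mult}_q(F_{red})\cdot\operatorname{mult}_q(S)$. Moreover $(f|_S)^*(b)=i^*(2F_{red})=2[p]$, so $f|_S$ is ramified over each of the two multiple fibres. These give two distinct ramification points, and by the count above they are \emph{all} of them.

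Finally, over a non-multiple fibre $F$ the map $f|_S$ is therefore unramified, so $(f|_S)^*(t)$ is a sum of two distinct reduced points; thus $S$ meets $F$ in two distinct points, each with local intersection multiplicity one. The remaining point, which I expect to be the only place needing genuine care, is to check that these points are smooth points of $F$. This follows from the same multiplicity inequality: if $S$ passed through a singular point $q$ of $F$ then $(S\cdot F)_q\ge\operatorname{mult}_q(F)\ge 2$, contradicting that $q$ occurs with multiplicity one in the unramified fibre $(f|_S)^*(t)$. This disposes of all cases; note in passing that the argument is intrinsic to $W$ and does not require passing to the K3 cover $V$.
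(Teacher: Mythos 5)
Your proof is correct and complete, but it takes a genuinely different route from the paper's. The paper argues via the K3 cover: since $V\to W$ is unramified and $S\cong\mP^1$ is simply connected, the preimage of $S$ splits into two disjoint sections $S_1,S_2$ of the induced fibration $g\colon V\to\mP^1$; a section meets every fibre of $g$ in a single smooth point, and descending to $W$ (a non-multiple fibre of $f$ splits into two fibres of $g$, while the reduced part of a multiple fibre is covered by one fibre of $g$) gives the lemma. You instead stay on the Enriques surface: Riemann--Hurwitz for the degree-two map $f|_S\colon S\to\mP^1$ permits exactly two simple ramification points; the computation $S\cdot F_{red}=1$ forces $(f|_S)^*(b)=2[p]$ over each of the two multiple fibres, so all ramification sits over the multiple fibres and $(f|_S)^*(t)$ is reduced for every non-multiple fibre; the inequality $(S\cdot F)_q\ge \operatorname{mult}_q(S)\operatorname{mult}_q(F)$ then upgrades reducedness of the pullback divisor to ``two distinct transversal intersections at smooth points of $F$,'' and upgrades $S\cdot F_{red}=1$ to ``one transversal intersection at a smooth point of $F_{red}$.'' As for the trade-off: your argument is more elementary and self-contained (no covering-space input) and yields slightly more, namely transversality everywhere and the precise statement that $f|_S$ ramifies exactly over the two multiple fibres; it does lean on the classification fact that an elliptic fibration on an Enriques surface has exactly two multiple fibres, both of multiplicity $2$, but that fact is recorded at the start of Section 3, so this is legitimate. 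The paper's proof, on the other hand, produces along the way the splitting of the $2$-section into two sections of the K3 cover, which is exactly what is reused in the set-up preceding Proposition 3.1 (the curves $S_i^1, S_i^2$ there), so its method does double duty in the main construction.
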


\begin{proof} Let $W$ be an Enriques surface and
$f: W\to \mP^1$ be an elliptic fibration on it. Let $V$ be the K3
cover of $W$. Since the double cover $V \to W$ is unramified, the
$2$-section $S$ splits into two sections $S_1, S_2$ of the
elliptic fibration $g:V\to \mP^1$ induced by the elliptic
fibration $f: W\to \mP^1$. Each $S_i$ passes through each fibre of
$g$ in a smooth point. This implies the result.
\end{proof}

\begin{lemma}\label{Enr-I9} If an elliptic fibration on an Enriques surface has a
singular fiber of  type $\mathrm{I_9}$ or $2\mathrm{I_9}$, then it
has three singular fibers of type $\mathrm{I_1}$ or $2\mathrm{I_1}$.
In particular, the fibration always has at least one singular fiber
of type $\mathrm{I_1}$.
\end{lemma}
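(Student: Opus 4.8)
The plan is to pass from the Enriques surface to a rational elliptic surface, where the admissible fibre configurations are classified, and then transfer the conclusion back. Let $f:W\to\mP^1$ be the given elliptic fibration and let $J\to\mP^1$ be its Jacobian fibration. Since $\chi(\cO_W)=1$ and $c_2(W)=12$, the Jacobian $J$ is a relatively minimal elliptic surface with a section over $\mP^1$ satisfying $\chi(\cO_J)=1$ and $c_2(J)=12$; such a surface is a rational elliptic surface (an $E(1)$). By construction $J$ carries the same singular fibres as $f$ over the same points of $\mP^1$, with each multiple fibre $2\mathrm{I_n}$ of $f$ replaced by a reduced fibre $\mathrm{I_n}$. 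In particular, the $\mathrm{I_9}$ or $2\mathrm{I_9}$ fibre of $f$ produces an $\mathrm{I_9}$ fibre of $J$.

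Next I would pin down the remaining fibres of $J$. The Shioda--Tate formula reads $10=\rho(J)=2+\sum_v(m_v-1)+\operatorname{rank}MW(J)$, where $m_v$ is the number of components of the fibre over $v$; the $\mathrm{I_9}$ fibre already contributes $m_v-1=8$, so every other singular fibre of $J$ is irreducible (type $\mathrm{I_1}$ or type $\mathrm{II}$) and $MW(J)$ has rank $0$. Since $c_2(J)=12$ and the $\mathrm{I_9}$ fibre accounts for Euler number $9$, the remaining fibres carry total Euler number $3$, so the only combinatorial possibilities are three $\mathrm{I_1}$ fibres, or one $\mathrm{I_1}$ together with one $\mathrm{II}$.

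The main obstacle is to exclude the second possibility, $\mathrm{I_9}+\mathrm{II}+\mathrm{I_1}$; this is the only step beyond pure bookkeeping. I would rule it out via the $j$-map $j:\mP^1\to\mP^1$, whose degree equals the total pole order, i.e. the sum of the $n$ over the multiplicative fibres, here $9+1=10$. A Riemann--Hurwitz count then yields a contradiction: over $\infty$ the fibres $\mathrm{I_9},\mathrm{I_1}$ contribute ramification $8$; over $0$ the type $\mathrm{II}$ fibre together with the smooth fibres, whose ramification indices are divisible by $3$, contribute at least $6$; and over $1728$ the smooth fibres, whose ramification indices are even since there is no fibre of type $\mathrm{III}$, contribute at least $5$. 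The total $8+6+5=19$ exceeds $2\deg j-2=18$, which is impossible. Hence $J$ has exactly the fibres $\mathrm{I_9},\mathrm{I_1},\mathrm{I_1},\mathrm{I_1}$. (Alternatively, one may simply invoke the Miranda--Persson classification of extremal rational elliptic surfaces, whose unique configuration containing an $\mathrm{I_9}$ fibre is $\mathrm{I_9}+3\mathrm{I_1}$.)

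Finally I would transfer the conclusion back to $W$. Over each of the three points of $\mP^1$ carrying an $\mathrm{I_1}$ fibre of $J$, the fibration $f$ has a fibre of reduced type $\mathrm{I_1}$; such a fibre is of type $\mathrm{I_1}$ unless it is one of the (exactly two) multiple fibres of $f$, in which case it is $2\mathrm{I_1}$. This produces the asserted three fibres of type $\mathrm{I_1}$ or $2\mathrm{I_1}$. Since $f$ has only two multiple fibres, at most two of these three can be multiple when the $\mathrm{I_9}$ fibre is non-multiple, and at most one when the $\mathrm{I_9}$ fibre is itself the multiple fibre $2\mathrm{I_9}$; in either case at least one of the three is a genuine non-multiple $\mathrm{I_1}$ fibre, giving the final assertion.
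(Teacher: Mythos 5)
Your proposal is correct, and it follows the paper's overall skeleton — pass to the Jacobian fibration $J(f)$, which by Cossec--Dolgachev has the same fibre types without multiplicities, reduce to a rational elliptic surface with an $\mathrm{I_9}$ fibre, and use the Picard number $10$ (Shioda--Tate) to force all remaining singular fibres to be irreducible — but it diverges at the crucial exclusion step. The paper rules out additive fibres arithmetically: for an $E(1)$ surface with an $\mathrm{I_9}$ fibre the Mordell--Weil group is $\mZ/3\mZ$, torsion sections inject into the group of simple points of every singular fibre, and the only fibre types whose smooth locus carries $3$-torsion are $\mathrm{IV^*}$, $\mathrm{IV}$, $\mathrm{I_n}$; irreducibility then forces type $\mathrm{I_1}$, and the Euler number count gives three of them. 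You instead do the Euler count first, reducing to the two configurations $\mathrm{I_9}+3\mathrm{I_1}$ and $\mathrm{I_9}+\mathrm{II}+\mathrm{I_1}$, and kill the second by a Riemann--Hurwitz estimate for the $j$-map of degree $10$: ramification at least $8$ over $\infty$, at least $6$ over $0$ (indices $\equiv 0$ or $1 \bmod 3$), at least $5$ over $1728$ (even indices), totalling $19>18$. Both arguments are sound; the paper's is shorter but hinges on the nontrivial input $MW\cong E_8/A_8\cong\mZ/3\mZ$ from Mordell--Weil lattice theory, while yours avoids Mordell--Weil theory altogether at the cost of invoking the classical ramification behaviour of the functional invariant (or, as you note, one can simply cite the Miranda--Persson classification of extremal rational elliptic surfaces). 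A minor bonus of your write-up: you make explicit the final bookkeeping with the two multiple fibres of the Enriques fibration that justifies the ``in particular'' clause, which the paper leaves implicit.
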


\begin{proof} Let $W$ be an Enriques surface and $f:W\to \mP^1$ be
an elliptic fibration on it. Let $J(f):J(W)\to \mP^1$ be the
Jacobian fibration of $f$. This is an elliptic fibration with a
section having singular fibres of the same type, without
multiplicity, as those of $f: W\to \mP^1$ (\cite{CD}, Theorem
5.3.1). In particular, the surface $J(W)$ is an $E(1)$ surface.
Now assume that $f$ has a singular fiber of type $\mathrm{I_9}$ or
$2\mathrm{I_9}$. Then, $J(f)$ has a singular fiber of type
$\mathrm{I_9}$. The result follows from the following lemma.
\end{proof}

\begin{lemma}\label{E1-I9} If an $E(1)$ surface has a singular
fiber of type $\mathrm{I_9}$, then it has three singular fibers of
type $\mathrm{I_1}$.
\end{lemma}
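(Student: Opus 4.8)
The plan is to pin down the singular fibre configuration by combining the Euler number constraint with the Shioda--Tate formula, and then to eliminate the one surviving ``bad'' configuration using the $j$-invariant. Since an $E(1)$ surface is a rational elliptic surface with a section, $c_2=12$, and the Euler numbers of its singular fibres sum to $12$; an $\mathrm{I}_9$ fibre contributes $9$, so the remaining singular fibres carry total Euler number $3$. Listing the fibre types of Euler number $\le 3$ (namely $\mathrm{I}_1,\mathrm{I}_2,\mathrm{I}_3,\mathrm{II},\mathrm{III}$), the only a priori possibilities are
\[
\{\mathrm{I}_9,\mathrm{I}_1,\mathrm{I}_1,\mathrm{I}_1\},\quad
\{\mathrm{I}_9,\mathrm{I}_1,\mathrm{I}_2\},\quad
\{\mathrm{I}_9,\mathrm{I}_3\},\quad
\{\mathrm{I}_9,\mathrm{I}_1,\mathrm{II}\},\quad
\{\mathrm{I}_9,\mathrm{III}\}.
\]

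First I would invoke the Shioda--Tate formula: for a rational elliptic surface $\operatorname{rank}\mathrm{MW}=8-\sum_v(m_v-1)$, the sum running over the reducible fibres with $m_v$ components. An $\mathrm{I}_9$ fibre already contributes $m_v-1=8$, saturating the rank $8$ of the frame lattice $E_8$. Hence the Mordell--Weil rank is $0$ and there can be no other reducible fibre. This immediately discards the three configurations containing an extra reducible fibre ($\mathrm{I}_2$, $\mathrm{I}_3$, $\mathrm{III}$), leaving only $\{\mathrm{I}_9,\mathrm{I}_1,\mathrm{I}_1,\mathrm{I}_1\}$ and the cuspidal configuration $\{\mathrm{I}_9,\mathrm{I}_1,\mathrm{II}\}$.

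The crux is to rule out $\{\mathrm{I}_9,\mathrm{I}_1,\mathrm{II}\}$, and here I would pass to the \emph{functional invariant} $j\colon\mathbb{P}^1\to\mathbb{P}^1$, which is non-constant because the $\mathrm{I}_9$ fibre forces a pole. Its degree equals the total pole order $\sum_n n$ over the multiplicative $\mathrm{I}_n$ fibres, so $d=9+1=10$ in this case. Over $j=\infty$ the preimages are exactly the two multiplicative fibres, with ramification indices equal to their pole orders $9$ and $1$; over $j=0$ the ramification index of $j$ at a smooth fibre is divisible by $3$ while the $\mathrm{II}$ fibre occurs with index $1$; over $j=1728$ the index at a smooth fibre is even. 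Since the ramification over a value is $d$ minus the number of preimages there, maximising the number of preimages over each of $0,1728,\infty$ minimises the forced ramification, and summing $\sum_p(e_p-1)$ over these three values yields the lower bound $8+6+5=19$ for the total ramification. But Riemann--Hurwitz for $j$ forces the total ramification to equal exactly $2d-2=18$, a contradiction. Hence only $\{\mathrm{I}_9,\mathrm{I}_1,\mathrm{I}_1,\mathrm{I}_1\}$ survives, which is the assertion. (The same bookkeeping reproves the elimination of the reducible configurations, so in principle Shioda--Tate can be bypassed entirely.)

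The step I expect to be the main obstacle is precisely the elimination of the cuspidal configuration $\{\mathrm{I}_9,\mathrm{I}_1,\mathrm{II}\}$: unlike the others it contains no extra reducible fibre, so it is invisible to the lattice-theoretic count and genuinely requires the finer information carried by $j$ — specifically the divisibility of its ramification indices over the elliptic points $0$ and $1728$ of the modular curve. An appealing alternative for this step, avoiding Riemann--Hurwitz, is a torsion argument: the $\mathrm{I}_9$ surface has $\mathrm{MW}=\mathbb{Z}/3$, and its three $3$-torsion sections are pairwise disjoint (the height-pairing computation, with the $\mathrm{I}_9$ local contribution, forces $(P\cdot O)=0$); but on a type $\mathrm{II}$ fibre the smooth locus is the torsion-free group $\mathbb{G}_a$, so all three sections would have to meet that fibre at its origin, contradicting disjointness.
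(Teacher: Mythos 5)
Your proposal is correct, but at the decisive step it takes a genuinely different route from the paper. Both arguments start identically: the Picard number $10$ of a rational elliptic surface (Shioda--Tate) forces every singular fibre other than the $\mathrm{I_9}$ to be irreducible, and the Euler number count $12-9=3$ then reduces the whole lemma to excluding a type $\mathrm{II}$ fibre, i.e.\ the configuration $\{\mathrm{I_9},\mathrm{I_1},\mathrm{II}\}$. The paper excludes it by torsion: for this fibration the Mordell--Weil group has order $3$, torsion specializes injectively into the group of simple points of every fibre, and the smooth locus of a type $\mathrm{II}$ fibre is the additive group $\mathbb{C}$, which has no $3$-torsion; hence every irreducible singular fibre must be of type $\mathrm{I_1}$. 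Your primary argument instead runs Riemann--Hurwitz on the functional invariant $j$: degree $10$, forced ramification at least $8+6+5=19$ over $\infty$, $0$, $1728$, exceeding $2\deg j-2=18$. This is valid, with one small imprecision: at a type $\mathrm{II}$ fibre the vanishing order of $j$ is only congruent to $1$ modulo $3$, not necessarily equal to $1$; but since your bound is obtained by maximizing the number of preimages, the estimate $\ge 6$ over $j=0$ is unaffected. The $j$-invariant route buys independence from any knowledge of the Mordell--Weil group --- as you note, it even re-proves the exclusion of the reducible configurations, so Shioda--Tate could be dispensed with entirely --- at the cost of a longer computation; the paper's route buys brevity, given the standard facts about torsion sections. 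Indeed, the ``appealing alternative'' you sketch at the end (the three pairwise disjoint $3$-torsion sections cannot all pass through the origin of an additive fibre) is essentially the paper's own proof, so you have in effect found both arguments.
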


\begin{proof}
Note that an $E(1)$ surface has Picard number 10 and topological
Euler characteristic $c_2=12$.  Since the elliptic fibration has a
singular fiber of type $\mathrm{I_9}$, all other singular fibres
must be irreducible. On the other hand, in this case the
Mordell-Weil group, i.e., the group of sections of the elliptic
fibration has order 3. The homomorphism from the Mordell-Weil group
to the group consisting of simple points of every singular fibre is
injective. A singular fibre contains a 3-torsion point iff it is of
type $\mathrm{IV^*}$ or $\mathrm{IV}$ or $\mathrm{I_n} (n\geq 1)$.
So an irreducible singular fibre containing a 3-torsion point must
be of type $\mathrm{I_1}$. Finally, a singular fiber of type
$\mathrm{I_n}$ has Euler number $n$.
\end{proof}

The following will be used in proving the ampleness of $K_X$ of a
singular surface $X$ obtained by contracting chains of smooth
rational curves on a smooth surface $Z$.

\begin{lemma}\label{KC} Let $Y$ be a smooth surface and $Z$ a blow-up of $Y$ at points $p_1,\ldots,
p_k$, possibly infinitely near. Let $C$ be an irreducible curve on
$Z$. Assume that $C$ has 1-dimensional image $C'$ in $Y$. Then
$K_Z.C=K_Y.C' +\sum m_i$, where  $m_i$ is the multiplicity of $C'$
at the point $p_i$. If in addition $Y$ is an Enriques surface,
then $K_Z.C=\sum m_i\ge 0$.
\end{lemma}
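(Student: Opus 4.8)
The plan is to reduce the statement to the case of a single blow-up and then induct on the number $k$ of blow-ups. Write $\pi\colon Z\to Y$ as a tower
$$Z=Z_k\xrightarrow{\pi_k}Z_{k-1}\to\cdots\to Z_1\xrightarrow{\pi_1}Z_0=Y,$$
where $\pi_j$ is the blow-up of $Z_{j-1}$ at $p_j$ with exceptional curve $E_j$. Since $C$ is irreducible with $1$-dimensional image, it is not contracted by $\pi$, so $C$ is the proper transform of $C'$. I would let $C^{(j)}\subset Z_j$ denote the proper transform of $C'$, so that $C^{(0)}=C'$ and $C^{(k)}=C$, and let $m_j$ be the multiplicity of $C^{(j-1)}$ at $p_j$; this is the precise meaning of ``the multiplicity of $C'$ at the (possibly infinitely near) point $p_j$''.

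For the single blow-up $\pi_j$ I would use the two standard relations $K_{Z_j}=\pi_j^*K_{Z_{j-1}}+E_j$ and $C^{(j)}=\pi_j^*C^{(j-1)}-m_jE_j$ in $\Pic(Z_j)$. Expanding the intersection product $K_{Z_j}\cdot C^{(j)}$ and applying the projection formula together with $\pi_j^*D\cdot E_j=0$ for every divisor $D$ on $Z_{j-1}$ and $E_j^2=-1$, all cross terms vanish except the last, which yields $K_{Z_j}\cdot C^{(j)}=K_{Z_{j-1}}\cdot C^{(j-1)}+m_j$. Summing this identity over $j=1,\ldots,k$ telescopes to $K_Z\cdot C=K_Y\cdot C'+\sum_{i=1}^k m_i$, which is the first assertion.

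For the Enriques case I would invoke that the canonical class of an Enriques surface is numerically trivial (indeed $2$-torsion), so that $K_Y\cdot C'=0$ and hence $K_Z\cdot C=\sum_i m_i$. Each $m_i$ is the multiplicity of an effective curve at a point and is therefore nonnegative, giving $K_Z\cdot C=\sum_i m_i\ge 0$.

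The routine parts are the single-blow-up intersection computation and the telescoping; the only point requiring care is the bookkeeping for infinitely near points, namely that at each stage the relevant multiplicity $m_j$ is that of the successive proper transform $C^{(j-1)}$ at $p_j$ rather than of $C'$ on $Y$. Setting up the tower $Z_j$ and the proper transforms $C^{(j)}$ at the outset makes this precise and is exactly what allows the induction to close cleanly.
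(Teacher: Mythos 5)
Your proof is correct: the reduction to a tower of single blow-ups, the relations $K_{Z_j}=\pi_j^*K_{Z_{j-1}}+E_j$ and $C^{(j)}=\pi_j^*C^{(j-1)}-m_jE_j$, the projection-formula computation, and the observation that $K_Y$ is numerically trivial on an Enriques surface together give exactly the stated formula, and your care with multiplicities at infinitely near points (taking $m_j$ to be the multiplicity of the successive proper transform) is the right convention. The paper states this lemma as a standard fact without proof, and your argument is precisely the standard one it implicitly relies on.
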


Let $\bar Y$ be an Enriques surface admitting an elliptic
fibration $f:\bar Y\to\mP^1$ with a $2$-section whose singular
fibers are either reducible or of type $m\mathrm{I_1}$ ($m=1$ or
2). Let $\bar S_1,\ldots, \bar S_\ell$ be $2$-sections. These are
$(-2)$-curves. Let
$$C_{\bar Y}:=\bar S_1+\ldots + \bar S_\ell+\bar G_1+ \ldots+ \bar
G_k,$$ where $\bar G_i$ is a $(-2)$-curve contained in a
non-multiple singular fiber. We assume that the support of
$\cup_{i=1}^k \bar G_i$ does not contain the support of a whole
singular fiber. We also assume that the elliptic fibration $f:\bar
Y\to\mP^1$ has a singular fiber $F_{\bar Y}$ of type
$\mathrm{I_1}$. By Lemma 3.1, no component of $C_{\bar Y}$ passes
through the node of $F_{\bar Y}$. Let $$\pi: \bar Z \to \bar Y$$
be the blow-up of $\bar Y$ at the node of $F_{\bar Y}$. Let $\bar
F$ be the proper transform of $F_{\bar Y}$ and $\bar E$ be the
$(-1)$-curve. Let $C_{\bar Z}$ be the proper transform of $C_{\bar
Y}$. Since the exceptional curve $\bar E$ does not meet the locus
of $C_{\bar Z}$, we use the same notation $\bar S_1,\ldots, \bar
S_\ell, \bar G_1, \ldots, \bar G_k$ for their proper transforms.
That is, $$C_{\bar Z}=\bar S_1+\cdots +\bar S_\ell+ \bar G_1+
\cdots+ \bar G_k.$$ We consider the unramified double cover $$p:
Z\to \bar Z$$ induced by the line bundle $L$ of $\bar Z$,
$$L=(\text{one multiple fiber})_{\text{red}}-(\text{the other
multiple fiber})_{\text{red}}.$$ Note that $L^2=\cO_{\bar Z}$ and
$K_{\bar Z}=\bar E+L$. Let $Y\to \bar Y$ be the K3 cover, and
$g:Y\to \mP^1$ be the elliptic fibration induced by $f$. The
surface $Z$ is also obtained by blowing up $Y$ at the nodes of the
two singular fibers of type $\mathrm{I_1}$ lying over $F_{\bar
Y}$. Let $E_1, E_2$ be the two $(-1)$-curves on $Z$. Then
$$K_Z=p^*(K_{\bar Z}+L)=p^*\bar E=E_1+E_2.$$
Let $C^1_Z, C^2_Z$ be the inverse image of $C_{\bar Z}$ in $Z$,
$F_1, F_2$ the inverse image of $\bar F$, $S_i^1, S_i^2$ the
inverse image of $\bar S_i$, and $G_i^1, G_i^2$ the inverse image
of $\bar G_i$. We note that $S_i^1, S_i^2$ are sections, and  for
$j=1, 2$ $$C^j_Z=S_1^j+\cdots +S_\ell^j+ G_1^j+ \cdots+ G_k^j.$$
Here we also use abuse of notation, $$\cO_{C_{\bar Z}}:=\cO_{\bar
S_1}\oplus\cdots\oplus\cO_{\bar S_\ell}\oplus\cO_{\bar G_1}
\oplus\cdots\oplus\cO_{\bar G_k},$$ and  for $j=1, 2$
$$\cO_{C^j_Z}:=\cO_{S_1^j}\oplus\cdots \oplus
\cO_{S_\ell^j}\oplus\cO_{G_1^j}\oplus \cdots \oplus \cO_{G_k^j}.$$
Assume that the divisor $C_{\bar Z}+\bar F+\bar E$ is a simple
normal crossing divisor. Then so is the divisor $C^j_{Z}+F_j+E_j$
for $j=1,2$. We have
\[p_*(\Omega^1_Z(K_Z))=
p_*(p^*\Omega^1_{\bar Z}(K_Z))=p_*p^*(\Omega^1_{\bar Z}(K_{\bar
Z}+L))= \Omega_{\bar Z}^1(K_{\bar Z})\oplus\Omega^1_{\bar
Z}(K_{\bar Z}+L).\] Tensoring with $K_Z$ the short exact sequence
\[ 0\to \Omega^1_Z \to \Omega^1_Z(\log(C^1_Z+C^2_Z+F_1+F_2))\to \cO_{C^1_Z}
\oplus\cO_{C^2_Z}\oplus\cO_{F_1}\oplus\cO_{F_2}\to 0,\] we get the
short exact sequence
\[ 0\to \Omega^1_Z(K_Z)\to \Omega^1_Z(\log(C^1_Z+C^2_Z+F_1+F_2))(K_Z)\to \cO_{C^1_Z}
\oplus\cO_{C^2_Z}\oplus\cO_{F_1}(E_1)\oplus\cO_{F_2}(E_2)\to 0\]
because all curves in the support of $C^1_Z\cup C^2_Z$ are smooth
rational curves that do not meet $E_1$ and $E_2$. Similarly, we
have two exact sequences of sheaves in $\bar Z$,
\[ 0\to \Omega^1_{\bar Z}(K_{\bar Z})\to \Omega^1_{\bar Z}(\log(C_{\bar Z}+\bar F))
(K_{\bar Z})\to
\cO_{C_{\bar Z}}\oplus\cO_{\bar F}(\bar E)\to 0,\]
\[ 0\to \Omega^1_{\bar Z}(K_{\bar Z}+L)\to \Omega^1_{\bar Z}(\log(C_{\bar Z}+\bar F))
(K_{\bar Z}+L)\to \cO_{C_{\bar Z}}\oplus\cO_{\bar F}(\bar E)\to
0.\] Since $p : Z\to {\bar Z}$ is an unramified double cover,
$p_*$ is an exact functor. Therefore we have
\[p_*(\Omega^1_Z(\log(C^1_Z+C^2_Z+F_1+F_2))(K_Z))= \Omega^1_{\bar
Z}(\log(C_{\bar Z}+\bar F))(K_{\bar Z})\oplus\Omega^1_{\bar
Z}(\log(C_{\bar Z}+\bar F)) (K_{\bar Z}+L).\] By a similar
argument as in the proof of Proposition 2.1, we have
\[H^0(Z, \Omega^1_Z(\log(C^1_Z+C^2_Z+F_1+F_2))(E_1+E_2))=H^0(Z,
\Omega^1_Z(\log(C^1_Z+C^2_Z+F_1+F_2+E_1+E_2))).\] The involution
$\iota$ induced from the double cover $p: Z\to\bar Z$ acts on
$H^0(Z, \Omega^1_Z(\log(C^1_Z+C^2_Z+F_1+F_2+E_1+E_2)))$ and the
$\iota$-invariant subspace is isomorphic to $H^0(\bar Z,
\Omega^1_{\bar Z}(\log(C_{\bar Z}+\bar F+\bar E)))$. And the
$\iota$-invariant subspace is isomorphic to $H^0(\bar Z,
\Omega^1_{\bar Z}(\log(C_{\bar Z}+\bar F))(K_{\bar Z}))$, because
$\iota$-invariant part of the decomposition of
$p_*(\Omega^1_Z(\log(C^1_Z+C^2_Z+F_1+F_2))(K_Z))$ is
$\Omega^1_{\bar Z}(\log(C_{\bar Z}+\bar F))(K_{\bar Z})$.
Therefore, by the Serre duality we have the following proposition.

\begin{proposition}
\label{prop 3.1}We assume that
\begin{enumerate}
\item $\bar S_1,\ldots, \bar S_\ell, \bar G_1, \ldots, \bar G_k, \bar F, \bar E$
are numerically independent in the Picard group of $\bar Z$.
\item The divisor
$\bar S_1+\cdots+\bar S_\ell+\bar G_1+\cdots+\bar G_k+\bar F+\bar
E$ is a simple normal crossing divisor on $\bar Z$.
\item $\bar G_1, \ldots, \bar G_k, \bar F$ are disjoint from two multiple fibers of the
elliptic fibration on $\bar Z$.
\item $2$-sections $\bar S_1,\ldots, \bar S_\ell$ do not meet the exceptional curve $\bar
E$.
\end{enumerate}
Then $H^2(\bar Z, T_{\bar Z}(-\log(\bar S_1+\cdots + \bar
S_\ell+\bar G_1+\cdots+\bar G_k+\bar F)))=0$.
\end{proposition}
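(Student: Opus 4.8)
The plan is to finish the computation that the displayed diagrams and pushforward formula above have already set up, so that the proof reduces to a single cohomology vanishing on $\bar Z$, proved exactly as in the last step of Proposition 2.1. By Serre duality the assertion $H^2(\bar Z, T_{\bar Z}(-\log(C_{\bar Z}+\bar F)))=0$ is equivalent to $H^0(\bar Z,\Omega^1_{\bar Z}(\log(C_{\bar Z}+\bar F))(K_{\bar Z}))=0$, and the discussion preceding the statement identifies this space with the $\iota$-invariant part of $H^0(Z,\Omega^1_Z(\log(C^1_Z+C^2_Z+F_1+F_2+E_1+E_2)))$, which is in turn isomorphic to $H^0(\bar Z,\Omega^1_{\bar Z}(\log(C_{\bar Z}+\bar F+\bar E)))$. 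Hence it is enough to prove that this last group vanishes.

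For that I would use the residue exact sequence
\[0\to \Omega^1_{\bar Z}\to \Omega^1_{\bar Z}(\log(C_{\bar Z}+\bar F+\bar E))\to \cO_{\bar S_1}\oplus\cdots\oplus\cO_{\bar S_\ell}\oplus\cO_{\bar G_1}\oplus\cdots\oplus\cO_{\bar G_k}\oplus\cO_{\bar F}\oplus\cO_{\bar E}\to 0,\]
which is legitimate because $C_{\bar Z}+\bar F+\bar E$ is simple normal crossing by assumption (2) and each component is a smooth rational curve. Since $\bar Z$ is a blow-up of an Enriques surface, its irregularity is zero, so $H^0(\bar Z,\Omega^1_{\bar Z})=0$; taking cohomology then identifies the group in question with the kernel of the connecting map $H^0(\bigoplus \cO)\to H^1(\bar Z,\Omega^1_{\bar Z})$. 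This connecting map is the first Chern class map, sending the generator on each component to its class in $H^{1,1}(\bar Z)=H^1(\bar Z,\Omega^1_{\bar Z})$. By assumption (1) the classes of $\bar S_1,\ldots,\bar S_\ell,\bar G_1,\ldots,\bar G_k,\bar F,\bar E$ are numerically independent, hence linearly independent in $H^1(\bar Z,\Omega^1_{\bar Z})$, so the map is injective and its kernel vanishes.

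The step I expect to be the real obstacle — and the reason the argument is routed through the K3 cover above rather than carried out directly — is the $2$-torsion twist in $K_{\bar Z}=\bar E+L$. One cannot simply invoke Proposition 2.1 on $Z$, because the split curves $S_i^j,G_i^j,F_j,E_j$ need not be numerically independent on the K3 cover, so the hypothesis of that proposition may fail there. What rescues the argument is that only the $\iota$-invariant subspace matters, and on invariants the relevant classes are the pullbacks $p^*\bar S_i,\,p^*\bar G_i,\,p^*\bar F,\,p^*\bar E$; since $p^*$ is injective on cohomology (as $p_*p^*$ is multiplication by $\deg p=2$), any numerical relation among these pullbacks would descend to one on $\bar Z$, which assumption (1) forbids. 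Equivalently, the offending twist $L$ becomes trivial after pulling back, $p^*L=\cO_Z$, which is exactly why the two descriptions of the invariant subspace — via $\Omega^1_{\bar Z}(\log(C_{\bar Z}+\bar F))(K_{\bar Z})$ and via $\Omega^1_{\bar Z}(\log(C_{\bar Z}+\bar F+\bar E))$ — coincide. Assumptions (3) and (4) enter precisely to make this reduction clean: (3) forces $\bar G_1,\ldots,\bar G_k,\bar F$ to split into disjoint pairs under the \'etale cover $p$, so that $L$ restricts trivially to them, and (4) keeps the $2$-sections off $\bar E$, so that the components of $C^1_Z\cup C^2_Z$ contribute untwisted structure sheaves in the sequences computing $p_*$.
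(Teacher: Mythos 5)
Your proposal is correct and takes essentially the same route as the paper: the paper's proof of Proposition 3.1 is exactly the discussion preceding it (Serre duality plus the K3-cover identification of $H^0(\bar Z,\Omega^1_{\bar Z}(\log(C_{\bar Z}+\bar F))(K_{\bar Z}))$ with the $\iota$-invariant part, hence with $H^0(\bar Z,\Omega^1_{\bar Z}(\log(C_{\bar Z}+\bar F+\bar E)))$), completed—just as you do—by the residue sequence and the injectivity of the first Chern class map from assumption (1), i.e.\ the last step of Proposition 2.1. Your account of why the cover is needed (the torsion twist $L$ in $K_{\bar Z}=\bar E+L$, trivialized by $p^*$) and of the roles of assumptions (3) and (4) matches the paper's intent.
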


By the same argument as in Section 2, we also get the following
proposition.

\begin{proposition}
\label{prop 3.2} With the same assumptions as in Proposition 3.1,
we denote $D_{\bar Z}:=\sum_{i=1}^\ell \bar S_i +\sum_{i=1}^k \bar
G_i+\bar F$. Let $\tau': \bar Z'\to \bar Z$ be a successive
blowing-up of points on $D_{\bar Z}$. Let $D_{\bar Z'}$ be the
reduced divisor of the total transform of $D_{\bar Z}$ or the
reduced divisor of the total transform of  $D_{\bar Z}$ minus some
$(-1)$-curves. Then $H^2(\bar Z', T_{\bar Z'}(-\log D_{\bar
Z'}))=0.$
\end{proposition}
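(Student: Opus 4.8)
The plan is to deduce the proposition from Proposition~\ref{prop 3.1} in exactly the way Proposition~\ref{prop 2.2} is deduced from Proposition~\ref{prop 2.1}: by an induction on the number of blow-ups, using the invariance of $h^2(T(-\log D))$ under a single blow-up centered on the boundary, and then a short extra argument to accommodate the deletion of $(-1)$-curves. Proposition~\ref{prop 3.1} provides the base case
\[H^2(\bar Z, T_{\bar Z}(-\log D_{\bar Z}))=0,\]
and the morphism $\tau':\bar Z'\to\bar Z$ factors as a tower of point blow-ups $V_{i+1}\to V_i$, each centered at a point $p_i$ lying on the reduced total transform $D_i$ of $D_{\bar Z}$ in $V_i$.

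First I would handle one blow-up $f:V'\to V$ at a point $p\in D$, with $D$ simple normal crossing and $D'$ the reduced total transform. Since $p$ is a smooth point or a node of $D$, adjoining the exceptional curve keeps $D'$ simple normal crossing, so $T_{V'}(-\log D')$ is defined and the standard fact recorded just before Proposition~\ref{prop 2.2} gives $h^2(V',T_{V'}(-\log D'))=h^2(V,T_V(-\log D))$. The efficient way to prove that fact is to establish $f_*T_{V'}(-\log D')\cong T_V(-\log D)$ together with $R^qf_*T_{V'}(-\log D')=0$ for $q>0$ by a local computation in coordinates adapted to $D$ at $p$; the Leray spectral sequence then yields the isomorphism on $H^2$. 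Feeding this inductively up the tower and using the base case produces
\[H^2\bigl(\bar Z', T_{\bar Z'}(-\log D_{\bar Z'}^{\mathrm{full}})\bigr)=0,\]
where $D_{\bar Z'}^{\mathrm{full}}$ is the full reduced total transform of $D_{\bar Z}$.

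It remains to allow $D_{\bar Z'}$ to be $D_{\bar Z'}^{\mathrm{full}}$ with some exceptional $(-1)$-curves removed. For a single smooth component $C$ of a simple normal crossing divisor $A_0+C$ one has the inclusion $T_{\bar Z'}(-\log(A_0+C))\subset T_{\bar Z'}(-\log A_0)$ fitting into a short exact sequence
\[0\to T_{\bar Z'}(-\log(A_0+C))\to T_{\bar Z'}(-\log A_0)\to \mathcal{N}\to 0,\]
where $\mathcal{N}$ is a line bundle supported on the curve $C$ (indeed $\mathcal{N}\cong\cO_C(C)=N_{C/\bar Z'}$, as one sees from the local model $x\partial_x,y\partial_y$ versus $\partial_x,y\partial_y$). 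As $\mathcal{N}$ lives on a curve, $H^2(\mathcal{N})=0$, so the induced map $H^2(T_{\bar Z'}(-\log(A_0+C)))\to H^2(T_{\bar Z'}(-\log A_0))$ is surjective; hence vanishing on the larger boundary forces vanishing on the smaller. Peeling off the unwanted $(-1)$-curves one at a time from $D_{\bar Z'}^{\mathrm{full}}$ then propagates the vanishing to $H^2(\bar Z', T_{\bar Z'}(-\log D_{\bar Z'}))=0$, which is the claim.

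The genuine content is not any individual cohomology computation but the verification that the hypotheses persist along the tower: one must check that every blow-up center lies on the current boundary so that the simple normal crossing condition, and with it the direct-image fact, survives at each stage, and that the deleted components really are $(-1)$-curves so the surjectivity step is available. The one place where an honest (though short) calculation is required is the identification $f_*T_{V'}(-\log D')=T_V(-\log D)$ with vanishing higher direct images for a point blow-up on the boundary; everything else is formal bookkeeping parallel to Section~\ref{sec-2}.
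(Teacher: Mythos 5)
Your overall architecture is exactly the paper's: the authors prove Proposition 3.2 ``by the same argument as in Section 2,'' i.e., they start from the vanishing in Proposition 3.1 and invoke the standard fact recorded before Proposition 2.2, that $h^2(T(-\log))$ is unchanged by blowing up a point of the boundary, inductively along the tower. Your peeling-off argument for the deleted $(-1)$-curves, via $0\to T_{\bar Z'}(-\log(A_0+C))\to T_{\bar Z'}(-\log A_0)\to\mathcal{N}\to 0$ with $\mathcal{N}$ a line bundle on $C$ (so $H^2(\mathcal{N})=0$ and the vanishing propagates to the smaller boundary), is correct --- it does not even require $C$ to be a $(-1)$-curve --- and it supplies the step the paper leaves implicit.

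However, the one step you actually propose to prove contains a false claim. The isomorphism $f_*T_{V'}(-\log D')\cong T_V(-\log D)$ holds when the center $p$ is a node of $D$ (there one even has $T_{V'}(-\log D')=f^*T_V(-\log D)$, the blow-up being log \'etale), but it fails when $p$ is a smooth point of $D$ --- a case you cannot avoid, since the proposition allows arbitrary, including infinitely near, centers on $D_{\bar Z}$, and the paper's examples (repeated blow-ups at $P_5$, at $\bar G_6\cap\bar G_7$, etc.) do produce centers lying on a single boundary component. In local coordinates with $D=\{y=0\}$, a log field $a\partial_x+b\,y\partial_y$ lifts to a section of $T_{V'}(-\log(\tilde D+E))$ if and only if $a(p)=0$; hence $f_*T_{V'}(-\log D')$ is the proper subsheaf of $T_V(-\log D)$ of fields vanishing at $p$, and the natural inclusion has cokernel $\mC_p$. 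Your conclusion survives, because the cokernel is a skyscraper, so $H^1(\mC_p)=H^2(\mC_p)=0$ gives $H^2(f_*T_{V'}(-\log D'))\cong H^2(T_V(-\log D))$, and one checks $R^1f_*T_{V'}(-\log D')=0$ (e.g.\ via $T_{V'}(-\log D')|_E\cong\cO_E\oplus\cO_E(1)$ and the theorem on formal functions), so Leray still applies; but as written, the ``local computation in coordinates adapted to $D$'' would refute, not establish, your key identity. Alternatively, prove the invariance dually, which is the standard route: $H^0(\Omega^1_{V'}(\log D')(K_{V'}))=H^0(\Omega^1_V(\log D)(K_V))$, since $f^*(\Omega^1_V(\log D)(K_V))\subseteq\Omega^1_{V'}(\log D')(K_{V'})$ and, conversely, sections downstairs on $U\setminus\{p\}$ extend by Hartogs; Serre duality then gives the $h^2$ statement.
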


By Proposition 3.2, one can construct surfaces of general type
with $p_g=q=0$ and $\pi_1=\mZ/2\mZ$ by using an Enriques surface
admitting a special elliptic fibration and the methods developed
in \cite{LP1}.

\m According to Kond\=o (Example $\mathrm{II}$ in \cite{Kon}),
there is an Enriques surface $\bar Y$ admitting an elliptic
fibration with a singular fiber of type $\mathrm{I_9}$, a singular
fiber of type $\mathrm{I_1}$, and two 2-sections $\bar S_1$ and
$\bar S_2$. Indeed, we take the 9 curves
 $F_1, F_2, F_3, F_5, F_6, F_7, F_9, F_{10}, F_{11}$
in Fig. 2.4, p. 207 in \cite{Kon}, which form a singular fiber of
type $\mathrm{I_9}$ (on the other hand, the 9 curves
 $F_1, F_2, F_3, F_5, F_8, F_7, F_9, F_{10}, F_{11}$ form a singular fiber of type $2\mathrm{I_9}$). Then by Lemma 3.2,
the elliptic fibration has a singular fiber of type
$\mathrm{I_1}$. Let $F_{\bar Y}$ be a singular fiber of type
$\mathrm{I_1}$. By Lemma 3.1, every 2-section does not pass
through the node of $F_{\bar Y}$. Finally we take the two curves
$F_4$ and $F_8$ as 2-sections $\bar S_1$ and $\bar S_2$. The
configuration of singular fibers and 2-sections on $\bar Y$ is
given in Figure~\ref{figure:Enriques}. Here we rename the components of the
$\mathrm{I_9}$ fibre as $\bar G_1, \bar G_2, \ldots, \bar G_9$.

\begin{figure}
\centering
\includegraphics{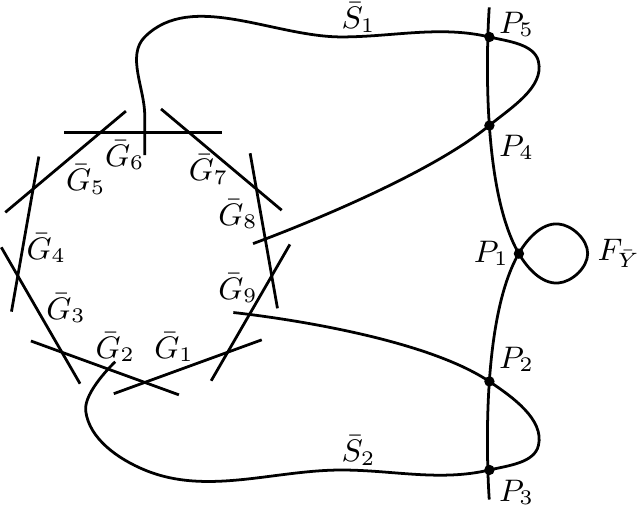}
\caption{Enriques surface}
\label{figure:Enriques}
\end{figure}

\begin{example}
Construction of surfaces of general type with $p_g=0$, $K^2=1$,
and $\pi_1=\mZ/2\mZ$.
\end{example}

Consider the Enriques surface $\bar Y$ in
Figure~\ref{figure:Enriques}. We blow up at $\bar G_6 \cap \bar
S_1$, $\bar G_8 \cap \bar S_1$, $\bar G_2 \cap \bar S_2$, $\bar
G_9 \cap \bar S_2$, and $\bar G_8 \cap \bar G_9$, to obtain a
surface $\tilde Z=\bar Y\# 5\ \overline{\mC\mP^2}$ with four
disjoint linear chains of $\mP^1$'s as shown in
Figure~\ref{figure:K^2=1}.
\[ {\overset{-4}{\circ}}-{\overset{-2}{\circ}}-{\overset{-3}{\circ}}-{\overset{-2}{\circ}},
\,\, \,
{\overset{-4}{\circ}}-{\overset{-2}{\circ}}-{\overset{-3}{\circ}}-{\overset{-2}{\circ}},
\,\, \,
{\overset{-4}{\circ}},
\,\, \,
{\overset{-4}{\circ}}
\]

\m

\begin{figure}
\centering
\includegraphics{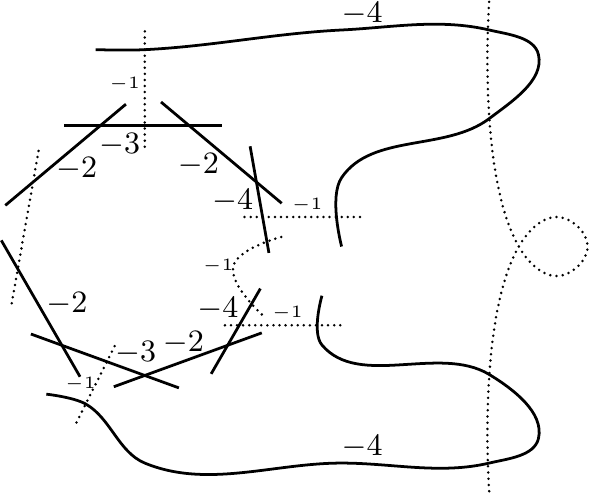}
\caption{$K^2=1$}
\label{figure:K^2=1}
\end{figure}

\m It is not hard to see $\bar S_1, \bar S_2, \bar G_1, \bar G_2,
\bar G_3, \bar G_5, \bar G_6, \bar G_7, \bar G_8, \bar G_9$ are
numerically independent in the Picard group of $\bar Z=\bar Y$ :
Set $a_1\bar G_1+a_2\bar G_2+a_3\bar G_3+a_4\bar G_5+a_5\bar
G_6+a_6\bar G_7+a_7\bar G_8+a_8\bar G_9 +a_9\bar S_1+a_{10}\bar
S_s$. Intersecting with $\bar G_i$, $\bar S_j$, $\bar F$ gives
$a_i=0$.

Let $f: \tilde Z\to X$ be the contraction of the four linear
chains of $\mP^1$'s in $\tilde Z$. By applying
Proposition~\ref{prop 3.1}, Proposition~\ref{prop 3.2}, and
$\mQ$-Gorenstein smoothing theory from \cite{LP1} to the singular
surface $X$, we construct a smooth complex surface $X_t$ of
general type with $p_g=0$ and $K^2=1$. It is easy to check that
the fundamental group of $X_t$ is $\mZ/2\mZ$ by the calculation
based on Van Kampen's theorem (see \cite{LP1}): note that the
index of the singular point obtained by contracting
${\overset{-4}{\circ}}$ is 2, and the index of the singular point
obtained by contracting
${\overset{-4}{\circ}}-{\overset{-2}{\circ}}-{\overset{-3}{\circ}}-{\overset{-2}{\circ}}$
is 3. The two indices are relatively prime.

We claim that the canonical divisor $K_X$, which is $\mQ$-Cartier,
is ample. To see this, we need to check $(f^*K_X).C>0$ for every
irreducible curve $C\subset\tilde Z$, not contracted by $f$. The
adjunction formula gives
$$(f^*K_X).C=K_{\tilde Z}.C+(\sum D_p).C,$$ where $D_p$ is an effective
$\mQ$-divisor supported on $f^{-1}(p)$ for each singular point
$p$.  Since $C$ is not contracted by $f$, $(\sum D_p).C\ge 0$. If
$K_{\tilde Z}.C>0$, then by the adjunction formula,
$(f^*K_X).C>0$. If $K_{\tilde Z}.C<0$, then by Lemma \ref{KC}, $C$
is an exceptional curve for the blowing-up ${\tilde Z}\to
\bar{Y}$, hence a $(-1)$-curve. If $K_{\tilde Z}.C=0$ and
$p_a(C)\ge 1$, then the image $C'$ of $C$ in the Enriques surface
$\bar{Y}$ is 1-dimensional and irreducible. By Lemma \ref{KC},
$C'$ passes through none of $p_i$'s and $p_a(C')=p_a(C)$. If
$p_a(C')\ge 2$, then by the Hodge index theorem, $C'$ intersects
the elliptic configuration $\bar S_1+\bar G_6+\bar G_7+\bar G_8$.
If $p_a(C')=1$, then $C'$ is a fibre or a half fibre of an
elliptic pencil. If $C'$ is linearly equivalent to $\bar S_1+\bar
G_6+\bar G_7+\bar G_8$ or to $2(\bar S_1+\bar G_6+\bar G_7+\bar
G_8)$, then $C'.\bar G_5>0$. If not, $C'.(\bar S_1+\bar G_6+\bar
G_7+\bar G_8)>0$. In any case, $C$ meets at least one of the 4
chains, so $(\sum D_p).C>0$, and hence by the adjunction formula,
$(f^*K_X).C>0$. It remains to check $(f^*K_X).C>0$ for every
$(-1)$-curve $C$ and $(-2)$-curve $C$ not contracted by $f$. It is
easy to check that every $(-2)$-curve $C$ not contracted by $f$
meets at least one of the four chains; by Lemma \ref{KC}, every
$(-2)$-curve $C$ on $\tilde Z$ comes from a $(-2)$-curve $C'$ on
$\bar Y$, and if $C'$ does not intersect the 10 curves $\bar S_1,
\bar S_2, \bar G_1, \bar G_2, \bar G_3, \bar G_5, \bar G_6, \bar
G_7, \bar G_8, \bar G_9$, then the 11 curves will be numerically
independent, a contradiction. So $(\sum D_p).C>0$ and hence,
$(f^*K_X).C>0$. For each of the five $(-1)$-curves, a direct
computation of $D_p$ shows that $(\sum D_p).C>1$, hence
$(f^*K_X).C>0$.

Finally, note that the ampleness is an open condition for a proper
morphism (cf. Proposition 1.41 in \cite{KM}), so a general fiber
of a $\mQ$-Gorenstein smoothing of $X$ has ample canonical class.

\begin{example} Construction of surfaces of general type with $p_g=0$, $K^2=2$,
and $\pi_1=\mZ/2\mZ$. \end{example}

Consider the Enriques surface $\bar Y$ again in Figure~\ref{figure:Enriques}. We blow
up at $P_1$, $P_4$, and three times at $P_5$, and $\bar G_6 \cap \bar G_7$, and $\bar G_8 \cap \bar S_1$ . Then we get
a surface $\tilde Z=\bar Y\# 7\ \overline{\mC\mP^2}$ with three
disjoint linear chains of $\mP^1$'s as shown in Figure~\ref{figure:K^2=2}.
\[ {\overset{-6}{\circ}}-{\overset{-2}{\circ}}-{\overset{-2}{\circ}}
, \, \, \,
{\overset{-7}{\circ}}-{\overset{-3}{\circ}}-{\overset{-2}{\circ}}
-{\overset{-2}{\circ}}-{\overset{-2}{\circ}}-{\overset{-2}{\circ}}, \, \, \,
{\overset{-3}{\circ}}-{\overset{-3}{\circ}}
\]

\m
\begin{figure}
\centering
\includegraphics{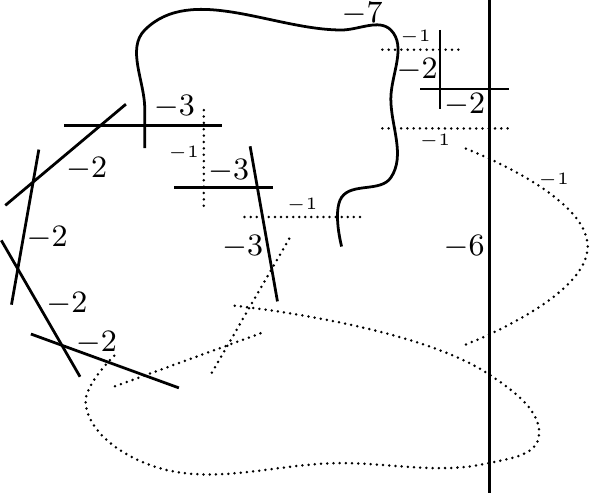}
\caption{$K^2=2$}
\label{figure:K^2=2}
\end{figure}

\m It is not hard to see $\bar S_1, \bar G_2, \bar G_3, \bar G_4,
\bar G_5, \bar G_6, \bar G_7, \bar G_8,\bar F, \bar E$ are
numerically independent in the Picard group of $\bar Z$, the
blow-up of $\bar Y$ at the node $P_1$ of the nodal fibre: Set
$a_1\bar S_1+a_2\bar G_2+a_3\bar G_3+a_4\bar G_4+a_5\bar
G_5+a_6\bar G_6+a_7\bar G_7+a_8\bar G_8+a_9\bar F+a_{10}\bar E=0$.
Intersecting with $\bar G_i$, $\bar S_j$, $\bar F$ gives $a_i=0$.
By applying $\mQ$-Gorenstein smoothing theory to the singular
surface $X$ obtained by contracting three linear chains of
$\mP^1$'s in $\tilde Z$, we construct a complex surface of general
type with $p_g=0$ and $K^2=2$. Similarly, one can check that the
fundamental group of this surface is $\mZ/2\mZ$: note that the
curve $\bar G_9$ in Figure~\ref{figure:K^2=2} meets only one end
curve in a linear chain of $\mP^1$ which is contracted.

By the same argument as in the case of $K^2=1$, a general fiber of
a $\mQ$-Gorenstein smoothing of $X$ has ample canonical class. In
this case, it is simpler to check the ampleness of $K_X$, as we
contract the proper transform of a fibre.

\begin{example} Construction of surfaces of general type with $p_g=0$,
$K^2=3$, and $\pi_1=\mZ/2\mZ$. \end{example}

Again consider the Enriques surface $\bar Y$ in Figure~\ref{figure:Enriques}. We blow
up at $P_1$, $P_2$, $P_3$, six times at $P_5$, twice at $P_4$, and
at one of the two intersection points between $S_2$ and the
singular fiber of type $\mathrm{I_9}$, to get a surface $\tilde
Z=\bar Y\# 12\ \overline{\mC\mP^2}$ with three disjoint linear
chains of $\mP^1$'s as shown in Figure~\ref{figure:K^2=3}.
\[ {\overset{-5}{\circ}}-{\overset{-2}{\circ}}, \, \, \,
{\overset{-9}{\circ}}-{\overset{-2}{\circ}}-{\overset{-2}{\circ}}
-{\overset{-2}{\circ}}-{\overset{-2}{\circ}}-{\overset{-2}{\circ}},
\, \, \,
{\overset{-2}{\circ}}-{\overset{-9}{\circ}}-{\overset{-2}{\circ}}-{\overset{-2}{\circ}}
-{\overset{-2}{\circ}}-{\overset{-2}{\circ}}-{\overset{-3}{\circ}}
\]

\m
\begin{figure}
\centering
\includegraphics{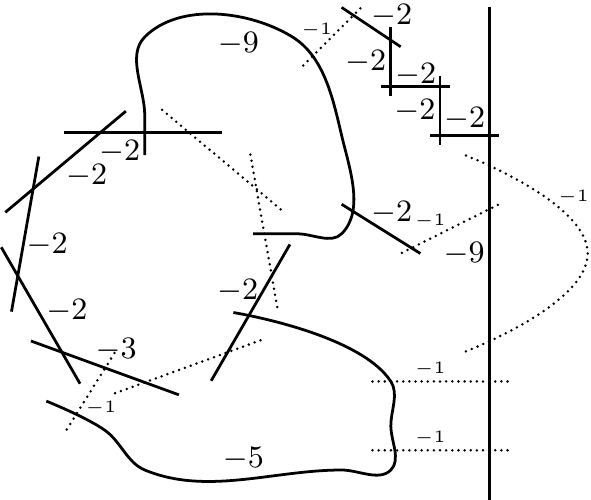}
\caption{$K^2=3$}
\label{figure:K^2=3}
\end{figure}

\m Similarly, we see that $\bar S_1, \bar S_2, \bar G_2, \bar G_3,
\bar G_4, \bar G_5, \bar G_6, \bar G_9, \bar F, \bar E$ are
numerically independent in the Picard group of $\bar Z$, the
blow-up of $\bar Y$ at the node $P_1$ of the nodal fibre: Set
$a_1\bar S_1+a_2\bar S_2+a_3\bar G_2+a_4\bar G_3+a_5\bar
G_4+a_6\bar G_5+a_7\bar G_6+a_8\bar G_9+a_9\bar F+a_{10}\bar E=0$.
Intersecting with $\bar G_7$ gives $a_7=0$, then intersecting with
$\bar G_8$ gives $a_1+a_8=0$. Intersecting with $\bar G_1$ gives
$a_3+a_8=0$, and intersecting with $\bar G_6$ gives $a_1+a_6=0$ by
using $a_7=0$. We have $-2a_8+a_2=0$ by intersecting with $\bar
G_9$. Then intersecting with $\bar G_2$ produces $a_2-2a_3+a_4=0$,
and intersecting with $\bar G_3$ gives $a_3-2a_4+a_5=0$.
Intersecting with $\bar G_4$ gives $a_4-2a_5+a_6=0$. These
relations give $a_8=a_6=-a_1, a_3=a_1, a_2=-2a_1, a_4=4a_1,
a_5=7a_1,$ and $a_6=10a_1$, But intersecting with $\bar G_5$ gives
$-2a_6+a_5=0$. So $a_1=0$, and $a_2=a_3=a_4=a_5=a_6=a_8=0$.
Finally intersecting with $\bar S_1$ gives $a_9=0$, and
intersecting with $\bar F$ produces $a_{10}=0$.

By applying $\mQ$-Gorenstein smoothing theory to the singular
surface $X$ obtained by contracting three linear chains of
$\mP^1$'s in $\tilde Z$, we construct a complex surface of general
type with $p_g=0$ and $K^2=3$. It is easy to check that the
fundamental group of this surface is $\mZ/2\mZ$: note that the
index of the singular point obtained by contracting
${\overset{-5}{\circ}}-{\overset{-2}{\circ}}$ is 3, and the index
of the singular point obtained by contracting
${\overset{-9}{\circ}}-{\overset{-2}{\circ}}-{\overset{-2}{\circ}}
-{\overset{-2}{\circ}}-{\overset{-2}{\circ}}-{\overset{-2}{\circ}}$
is 7. The two indices are relatively prime.

As in the previous cases, one can show that a general fiber of a
$\mQ$-Gorenstein smoothing of $X$ has ample canonical class.

\begin{remark} One can use other Enriques surfaces. For example,
take the Enriques surface, Example $\mathrm{VII}$ in \cite{Kon}.
On this Enriques surface $\bar Y$, there is an elliptic fibration
with a singular fiber of type $\mathrm{I_9}$, a singular fiber of
type $\mathrm{I_1}$, and two 2-sections. Indeed, we take the 9
curves
 $E_1, E_2, E_3, E_4, E_5, E_6, E_7, E_8, E_9$
in Fig. 7.7, p. 233 in \cite{Kon}. These form a singular fiber of
type $\mathrm{I_9}$, as it splits in the K3 cover of $\bar Y$ as
we see in Fig.7.3, p. 230. Then by Lemma 3.2, the elliptic
fibration has a singular fiber of type $\mathrm{I_1}$. Let
$F_{\bar Y}$ be a singular fiber of type $\mathrm{I_1}$. By Lemma
3.1, every 2-section passes through two smooth points of $F_{\bar
Y}$. Finally we take the two curves $E_{10}$ and $E_{11}$ as
2-sections. Let $E_{10}\cap E_{11}=\{P_1\}$, $E_{10}\cap F_{\bar
Y}=\{P_2, P_3\}$, $E_{11}\cap F_{\bar Y}=\{P_4, P_5\}$. Blowing up
once at $P_1, P_2, P_3, P_4$, the node of $F_{\bar Y}$, and 5
times at $P_5$, we get a surface $\tilde Z=\bar Y\# 10\
\overline{\mC\mP^2}$ with three disjoint linear chains of
$\mP^1$'s
\[ {\overset{-5}{\circ}}-{\overset{-2}{\circ}}, \, \, \,
{\overset{-9}{\circ}}-{\overset{-2}{\circ}}-{\overset{-2}{\circ}}
-{\overset{-2}{\circ}}-{\overset{-2}{\circ}}-{\overset{-2}{\circ}},
\, \, \,
{\overset{-8}{\circ}}-{\overset{-2}{\circ}}-{\overset{-2}{\circ}}-{\overset{-2}{\circ}}
-{\overset{-2}{\circ}},\] which leads to a construction of
surfaces of general type with $p_g=0$, $K^2=3$, and
$\pi_1=\mZ/2\mZ$.
\end{remark}

\begin{example} Construction of surfaces of general type with $p_g=0$,
$K^2=4$, and $\pi_1=\mZ/2\mZ$. \end{example}

We blow up at $P_1$, $P_2$, $P_3$, $S_1 \cap \bar{G}_6$, and three times at $P_5$, and eight times at
$\bar{G}_6 \cap \bar{G}_7$ on the Enriques surface $\bar Y$ in Figure~\ref{figure:Enriques}. We then get a surface $\tilde
Z=\bar Y\# 15\ \overline{\mC\mP^2}$ with two disjoint linear
chains of $\mP^1$'s as shown in Figure~\ref{figure:K^2=4}.
\begin{align*}
&\overset{-2}{\circ}-\overset{-2}{\circ}-\overset{-9}{\circ}-\overset{-2}{\circ}-\overset{-2}
{\circ}-\overset{-2}{\circ}-\overset{-2}{\circ}-\overset{-4}{\circ},\\
&\overset{-2}{\circ}-\overset{-2}{\circ}-\overset{-7}{\circ}-\overset{-6}{\circ}-\overset{-2}
{\circ}-\overset{-3}{\circ}-\overset{-2}{\circ}-\overset{-2}{\circ}-\overset{-2}{\circ}-\overset{-2}{\circ}-\overset{-4}{\circ}
\end{align*}
Similarly, we see that $\bar S_1, \bar S_2, \bar G_2, \bar G_3,
\bar G_4, \bar G_5, \bar G_6, \bar G_7, \bar G_8, \bar F, \bar E$
are numerically independent in the Picard group of $\bar Z$, the
blow-up of $\bar Y$ at the node $P_1$ of the nodal fibre. \m
\begin{figure}
\centering
\includegraphics{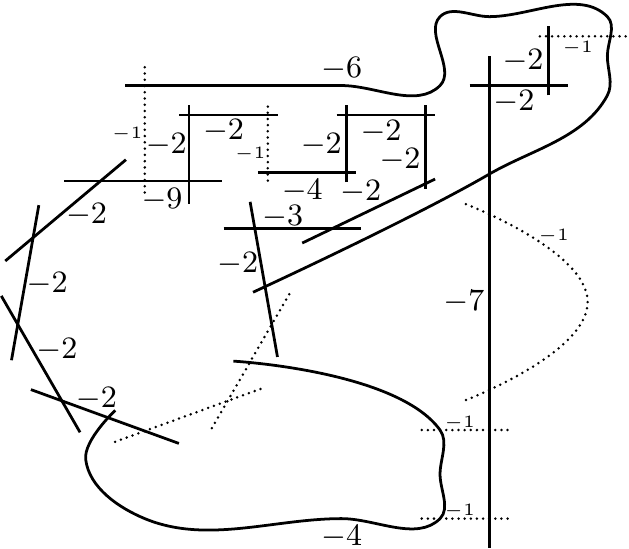}
\caption{$K^2=4$}
\label{figure:K^2=4}
\end{figure}

By applying $\mQ$-Gorenstein smoothing theory  as in \cite{LP1} to
the singular surface $X$ obtained by contracting two linear chains
of $\mP^1$'s in $\tilde Z$, we construct a complex surface of
general type with $p_g=0$ and $K^2=4$. It is easy to check that
the fundamental group of this surface is $\mZ/2\mZ$ by the same
method. And as in the previous cases, a general fiber of a
$\mQ$-Gorenstein smoothing of $X$ has ample canonical class.

\begin{example} Construction of a symplectic $4$-manifold with $b_2^+=1$, $K^2=5$, and $\pi_1 =
\mathbb{Z}/2\mathbb{Z}$. \end{example}

We consider the Enriques surface in Figure~\ref{figure:Enriques}.
According to Kond\=o~\cite{Kon} the Enriques surface has two
$I_1$-singular fibers as in Figure~\ref{figure:K^2=5-Y}. We blow up
five times totally at the five marked points $\bullet$ as in
Figure~\ref{figure:K^2=5-Y}. We blow up again three times and four
times at the two marked points $\bigodot$, respectively. We then get
a surface  $Z=\bar Y\# 12\ \overline{\mC\mP^2}$;
Figure~\ref{figure:K^2=5-Z}. There exist two disjoint linear chains
of $\mathbb{CP}^1$'s in $Z$:
\begin{align*}
&\overset{-6}{\circ}-\overset{-2}{\circ}-\overset{-2}{\circ} \\
&\overset{-5}{\circ}-\overset{-8}{\circ}-\overset{-6}{\circ}-\overset{-2}
{\circ}-\overset{-3}{\circ}-\overset{-2}{\circ}-\overset{-2}{\circ}-\overset{-2}{\circ}
-\overset{-2}{\circ}-\overset{-2}{\circ}-\overset{-3}{\circ}-\overset{-2}{\circ}-\overset{-2}{\circ}-\overset{-2}{\circ}
\end{align*}

\begin{figure}[hbtb]
\centering
\includegraphics{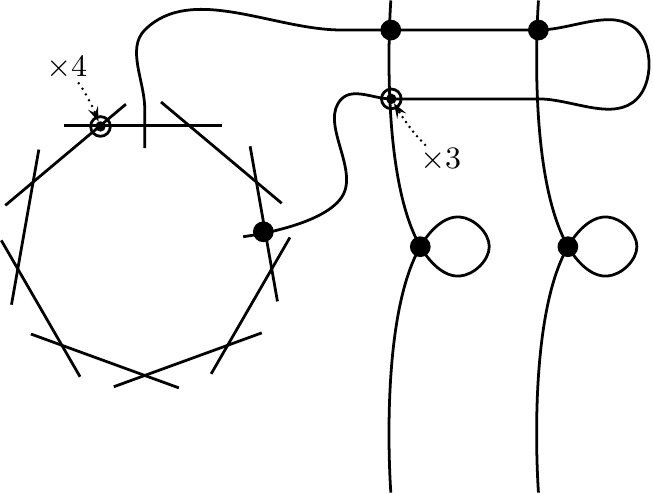}
\caption{The Enriques surface}
\label{figure:K^2=5-Y}
\end{figure}

\begin{figure}[hbtb]
\centering
\includegraphics{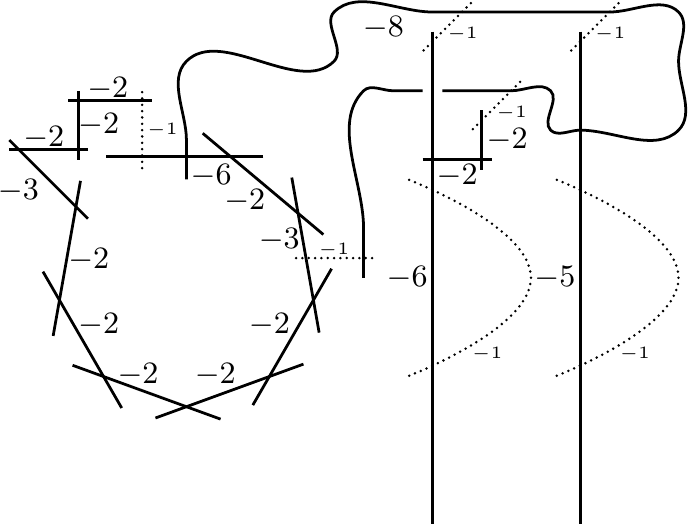}
\caption{Symplectic $K^2=5$}
\label{figure:K^2=5-Z}
\end{figure}

We now perform a rational blow-down surgery of the surface $Z=\bar
Y\# 12\ \overline{\mC\mP^2}$. The rational blow-down $\tilde{Z}$
is a symplectic $4$-manifold. Thus we get a symplectic
$4$-manifold $\tilde{Z}$ with $b_2^+=1$ and $K^2=5$. It is easy to
show that $\pi_1(Z_{151,4})=\mathbb{Z}/2\mathbb{Z}$.

\begin{enumerate}
\item One can prove that the symplectic $4$-manifold $\widetilde{Z}$ constructed above is
minimal by using a technique in Ozsv\'{a}th and Szab\'{o}~\cite{Ozsvath-Szabo}.

\item It is an intriguing question whether the symplectic $4$-manifold $\widetilde{Z}$ admit a complex structure.
Since the cohomology $H^2 (T^0_{X})$ is not zero in this case, it is
hard to determine whether there exists a global
$\mathbb{Q}$-Gorenstein smoothing. We leave this question for future
research.
\end{enumerate}

\begin{remark}
 A surface $X$ of general type with $p_g=0$, $K^2=k
 (1\le k\le 7)$, and $\pi_1=\mZ/2\mZ$ provides an exotic structure
on $3\mC\mP^2\# (19-2k)\overline{\mC\mP^2}$. The universal double
cover $Y$ of $X$ is a simply connected surface of general type
with $p_g=1$, $c_2=24-2k$, $b_2^+=3$, $b_2^-=19-2k$. Its index
$\sigma=16-2k$ is not divisible by 16, so by Rohlin's Theorem
\cite{Roh} the intersection form on $H^2(Y,\mZ)$ is odd and then
by Freedman's Theorem \cite{Free} Y is homeomorphic to
$3\mC\mP^2\# (19-2k)\overline{\mC\mP^2}$. By a result of Donaldson
\cite{Don} or by a result of Friedman and Qin \cite{FQ}, $Y$ is
not diffeomorphic to $3\mC\mP^2\# (19-2k)\overline{\mC\mP^2}$.
\end{remark}

\section{The case of an  $E(3)$ surface}
\label{sec-4}

In this section, we give a sufficient condition for the existence
of a $\mQ$-Gorenstein smoothing of a singular surface obtained
from an  $E(3)$ surface (Proposition \ref{pro-4.1}). We also show
that if the singular surface is obtained by contracting two
disjoint sections and other curves, then it always has non-trivial
obstruction space (Proposition \ref{pro-4.2}).

 Let $Y$ be an $E(3)$ surface. Let $F$
be a general fiber of the elliptic fibration $f: Y\to \mP^1$,
which is a smooth elliptic curve. Let $C$ be a section (it is a
$(-3)$-curve), and let  $G_1,\ldots, G_k$ be $(-2)$-curves in the
union of singular fibers. Assume that the support of $\cup_{i=1}^k
G_i$ does not contain the support of a whole singular fiber. We
note that $K_Y=F$. Set $G:=G_1+\cdots +G_k$, and
$\cO_G:=\cO_{G_1}\oplus\cdots\oplus\cO_{G_k}$ for abbreviation.

\begin{proposition}
\label{pro-4.1} With the assumptions and the notation as above,
assume further that $G_1, \ldots, G_k$, $F, C$
are numerically independent in the Picard group of $Y$. Then $H^0(Y, \Omega_Y^1(\log(C+G))(F))=0$.
\end{proposition}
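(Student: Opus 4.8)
The plan is to imitate the residue computation in the proof of Proposition 2.1, but now to carry the nontrivial twist by $K_Y=F$ through the whole argument. Since $C+G$ is a reduced simple normal crossing curve with smooth components, the Poincar\'e residue gives
\[
0\to \Omega_Y^1 \to \Omega_Y^1(\log(C+G)) \overset{\mathrm{res}}{\longrightarrow} \cO_C\oplus\cO_G\to 0 ,
\]
and tensoring with $\cO_Y(F)=\omega_Y$ produces
\[
0\to \Omega_Y^1(F) \to \Omega_Y^1(\log(C+G))(F) \overset{\mathrm{res}}{\longrightarrow} \cO_C(F)\oplus\cO_G(F)\to 0 .
\]
As $C$ is a section, $F\cdot C=1$, so $\cO_C(F)\cong\cO_{\mP^1}(1)$; as each $G_i$ lies in a fibre, $F\cdot G_i=0$, so $\cO_{G_i}(F)\cong\cO_{\mP^1}$. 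Passing to cohomology, the asserted vanishing $H^0(Y,\Omega_Y^1(\log(C+G))(F))=0$ reduces to two facts: (i) $H^0(Y,\Omega_Y^1(F))=0$, and (ii) the connecting map $\delta\colon H^0(\cO_C(F))\oplus\bigoplus_i H^0(\cO_{G_i}(F))\to H^1(Y,\Omega_Y^1(F))$ is injective.

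For (i), note first that by Serre duality $H^0(Y,\Omega_Y^1(F))\cong H^2(Y,T_Y)^\vee$, so this is really the vanishing of the obstruction space of $Y$ itself. I would prove it through the cotangent sequence of $f\colon Y\to\mP^1$,
\[
0\to f^*\Omega_{\mP^1}^1 \to \Omega_Y^1 \to \Omega_{Y/\mP^1}^1\to 0 .
\]
Since $f^*\Omega_{\mP^1}^1\cong f^*\cO_{\mP^1}(-2)$ and $\cO_Y(F)\cong f^*\cO_{\mP^1}(1)$, after twisting by $F$ the sub-bundle becomes $f^*\cO_{\mP^1}(-1)$, which has no sections; hence $H^0(\Omega_Y^1(F))$ injects into $H^0(\Omega_{Y/\mP^1}^1(F))$. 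Now the relative cotangent sheaf is torsion-free of rank one and embeds in its double dual, the relative dualizing sheaf $\omega_{Y/\mP^1}\cong f^*\cO_{\mP^1}(3)$; in fact $\Omega_{Y/\mP^1}^1\cong \mathcal{I}_W\otimes f^*\cO_{\mP^1}(3)$, where $W$ is the critical scheme of $f$, supported at the singular points of the singular fibres. Consequently $H^0(\Omega_{Y/\mP^1}^1(F))$ is identified with the space of $s\in H^0(\mP^1,\cO_{\mP^1}(4))$ whose pullback vanishes along $W$, i.e. with sections of $\cO_{\mP^1}(4)$ vanishing at every critical value of $f$. Provided $f$ has at least five singular fibres — which holds in the configurations at hand — no nonzero degree-four form on $\mP^1$ can vanish at all of them, giving (i).

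For (ii) I would argue as in Proposition 2.1: the map $\delta$ is the twisted first Chern class map, sending the constant residue $1$ along $G_i$ to $c_1(\cO_Y(G_i))$ and being governed along $C$ by $c_1(\cO_Y(C))$. The numerical independence of $C,G_1,\dots,G_k,F$ in $\Pic(Y)$ makes the classes $c_1(\cO_Y(C)),c_1(\cO_Y(G_i))$ linearly independent in $H^1(Y,\Omega_Y^1)$, which forces $\delta$ to be injective. Together, (i) and (ii) give $\ker(\mathrm{res})=H^0(\Omega_Y^1(F))=0$, hence the proposition.

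The main obstacle is (i). In the $K3$ setting of Proposition 2.1 the canonical class is (essentially) trivial and the untwisted vanishing $H^0(\Omega^1)=0$ is immediate from $q=0$; here the twist by $K_Y=F$ is unavoidable, so one must actually control $H^0(\Omega_Y^1(F))=H^2(Y,T_Y)^\vee$ through the geometry of the fibration, and it genuinely depends on there being enough singular fibres. A secondary subtlety, absent in the fibre-component case, is that $\cO_C(F)\cong\cO_{\mP^1}(1)$ is two-dimensional, so the residue along the section $C$ is allowed to vary; one must check that the twisted Chern class computation still annihilates this moving part, which is exactly where $F\cdot C=1$ and the independence of the class of $C$ enter.
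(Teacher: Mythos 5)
Your reduction via the twisted residue sequence and your item (i) are both sound starting points --- indeed $H^0(Y,\Omega^1_Y(F))=0$ is one of the two inputs the paper also uses (it gets it from Lemma 2 of \cite{LP1}). The fatal problem is item (ii). The connecting map of the \emph{twisted} sequence is not a Chern class map: every residue along $C$ is of the form $s|_C$ for some $s\in H^0(\cO_Y(F))$, and $\delta(s|_C)$ is the image of $c_1(\cO_Y(C))$ under the multiplication map $s\cdot\colon H^1(Y,\Omega^1_Y)\to H^1(Y,\Omega^1_Y(F))$. That multiplication map has nontrivial kernel: from $0\to\Omega^1_Y\overset{s}{\to}\Omega^1_Y(F)\to\Omega^1_Y(F)\otimes\cO_{F_0}\to 0$ (with $F_0$ the zero fibre of $s$) and your own vanishing (i), the kernel is isomorphic to $H^0(\Omega^1_Y(F)\otimes\cO_{F_0})\neq 0$. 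So linear independence of $c_1(\cO_Y(C)),c_1(\cO_Y(G_i))$ in $H^1(Y,\Omega^1_Y)$ does not force $\delta$ to be injective on the moving, two-dimensional part $H^0(\cO_C(F))$. You flag exactly this point in your final paragraph as something "one must check," but you never check it --- and it is not a secondary subtlety, it is the entire content of the proposition.

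In fact your principle (numerical independence of the curves implies injectivity of the twisted connecting map) is refuted by the paper itself: Proposition 4.2 shows that for two disjoint sections $C_1,C_2$ one has $H^0(Y,\Omega^1_Y(\log(C_1+C_2))(F))\neq 0$, even though $C_1,C_2,F$ are numerically independent in $\Pic(Y)$ (the Gram matrix has determinant $6$). Your argument for (ii) would apply verbatim there and give vanishing, a contradiction. Any correct proof must distinguish one section from two, i.e.\ must see the number of points in which $F$ meets the log divisor; this is what the paper's snake-lemma diagram achieves. It reduces the claim to the \emph{untwisted} group $H^0(\Omega^1_Y(\log(C+G+F)))$ --- where the Chern-class/independence argument is legitimate --- plus the cokernel sheaf $\cK=\omega_F(p)$ with $h^0(\cK)=1$, and then proves that $H^0(\cK)\to H^1(\Omega^1_Y(\log(C+G+F)))$ is injective using $H^0(\Omega^1_Y(F))=0$ together with the fact that the image of $H^0(F,\omega_F(F))\cong H^0(\cK)$ in $H^1(Y,\Omega^1_Y)$ meets the span of the Chern classes only in $0$; with two sections the corresponding sheaf is $\omega_F(p+q)$, $h^0=2$, and a one-dimensional subspace dies, which is exactly why Proposition 4.2 fails to vanish. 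A secondary issue: your proof of (i) assumes $\Omega^1_{Y/\mP^1}$ is torsion-free, which holds only when all fibres are reduced (it acquires one-dimensional torsion along multiple components of fibres of type $\mathrm{I}_n^*$, $\mathrm{II}^*$, $\mathrm{III}^*$, $\mathrm{IV}^*$), and it also needs at least five singular fibres --- neither hypothesis appears in the proposition, whereas the paper's citation of \cite{LP1} avoids both.
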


\begin{proof}
The proof is also obtained by the following commutative diagram and
the snake lemma.
\[\begin{array}{ccccccc}
& 0 & & 0& &  & \\
& \downarrow & & \downarrow & &  & \\
0\to & \Omega_Y^1 & \to & \Omega_Y^1(F) & \to & \Omega_Y^1(F)\otimes\cO_F &\to 0 \\
& \downarrow & & \downarrow & & \downarrow & \\
0\to & \Omega_Y^1(\log(C+G+F)) & \to & \Omega_Y^1(\log(C+G))(F) & \to & \cK &\to 0 \\
& \downarrow & & \downarrow & & \downarrow & \\
0\to\cO_F\to & \cO_C\oplus\cO_G\oplus\cO_F & \to &
\cO_C(F)\oplus\cO_G(F) & \to & \mC_p &\to
0\\
& \downarrow & & \downarrow & & \downarrow & \\
& 0 & & 0& & 0 &
\end{array}\]
where $\cK$ is the cokernel of the map from
$\Omega_Y^1(\log(C+G+F))$ to $\Omega_Y^1(\log(C+G))(F)$ and $p$ is
the intersection point of $F$ and $C$.

We have a short exact sequence
\[0\to \cO_F\to \Omega_Y^1(F)\otimes\cO_F \to \Omega_F^1(F)\to 0,\]
and then by the snake lemma, another short exact sequence
\[0\to \Omega_F^1(F)\to \cK\to \mC_p\to 0.\]
We get $\cK=\omega_F(p)$. So, $h^0(\cK)=1$. By the same argument
as in the proof of Lemma 2 of \cite{LP1}, we get $H^0(Y,
\Omega_Y^1(F))=0$. Therefore $H^0(F, \omega_F(F))\cong H^0(F,\cK)$
maps injectively into $H^1(Y, \Omega_Y^1)$, and its image contains
no non-zero vector of the image of
$H^0(\cO_C\oplus\cO_G\oplus\cO_F)$. It implies that $H^0(F, \cK)$
maps injectively into $H^1(Y, \Omega^1_Y(\log(C+G+F)))$.

Since $H^0(Y, \Omega_Y^1)=0$ and the first Chern class map from
$H^0(\cO_C\oplus\cO_G\oplus\cO_F)$ to $H^1(Y, \Omega_Y^1)$ is
injective by the assumption, we get the vanishing $H^0(Y,
\Omega_Y^1(\log(C+G+F)))=0$. And then we have the vanishing $H^0(Y,
\Omega_Y^1(\log(C+G))(F))=0$.
\end{proof}

By the Serre duality, $H^2(Y, T_Y(-\log(C+G)))=0$. But if we choose
two disjoint sections $C_1$ and $C_2$, then $H^0(Y,
T_Y(-\log(C_1+C_2+G)))\ne 0$.

\begin{proposition}
\label{pro-4.2} $H^0(Y, \Omega_Y^1(\log(C_1+C_2))(F))\ne 0$.
\end{proposition}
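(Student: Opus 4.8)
The plan is to reduce the statement to a single connecting-homomorphism computation and to show that this map fails to be injective, with the failure governed by the residue theorem on the fibres. Write $D=C_1+C_2$ and $p_i=C_i\cap F$, and recall $K_Y=F$ together with $H^0(Y,\Omega_Y^1(F))=0$ (the latter by the argument of Lemma 2 of \cite{LP1} already invoked in the proof of Proposition \ref{pro-4.1}). I would start from the residue sequence of the normal crossing divisor $D$, twisted by $\cO_Y(F)$,
\[0\to\Omega_Y^1(F)\to\Omega_Y^1(\log D)(F)\to\cO_{C_1}(F)\oplus\cO_{C_2}(F)\to 0.\]
Since each $C_i$ is a section, $F\cdot C_i=1$, so $\cO_{C_i}(F)\cong\cO_{\mP^1}(1)$ and $h^0(\cO_{C_i}(F))=2$. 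Using $H^0(\Omega_Y^1(F))=0$, the long exact sequence gives
\[H^0(Y,\Omega_Y^1(\log D)(F))=\ker\big(\partial\colon \cV\to H^1(Y,\Omega_Y^1(F))\big),\qquad \cV:=H^0(\cO_{C_1}(F))\oplus H^0(\cO_{C_2}(F))\cong\mC^4.\]
Thus it suffices to prove that $\partial$ has nonzero kernel, i.e.\ $\operatorname{rank}\partial\le 3$.

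The conceptual reason a kernel must exist is the residue theorem applied fibrewise. A logarithmic $1$-form along $D$ has, on each smooth fibre $F_t$, residues at $C_1\cap F_t$ and $C_2\cap F_t$ whose sum vanishes; hence the \emph{admissible} residue data form the anti-diagonal in $\cV$, a copy of $H^0(\cO_{\mP^1}(1))\cong\mC^2$, and the relative differential of the third kind realizing these residues is exactly the section being sought. This is the precise point at which two sections differ from one: for a single section the residue at the unique point $C\cap F_t$ is forced to be $0$, the admissible data collapse to $0$, and one recovers the vanishing of Proposition \ref{pro-4.1}. I would use this dichotomy both as a guide and as a sanity check.

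To make this rigorous I would run the commutative diagram and snake-lemma argument of Proposition \ref{pro-4.1} with $C_1+C_2$ in place of $C+G$. The only change in the output is that the cokernel sheaf $\cK$ becomes $\omega_F(p_1+p_2)=\cO_F(p_1+p_2)$, so that $h^0(\cK)=2$ rather than $1$; carrying this extra dimension through the diagram is what produces the nonzero class. Equivalently, and perhaps more transparently, I would work on the Serre-dual side, where the claim reads $H^2(Y,T_Y(-\log D))\ne 0$: from
\[0\to T_Y(-\log D)\to T_Y\to N_{C_1}\oplus N_{C_2}\to 0,\qquad N_{C_i}=\cO_{\mP^1}(-3),\]
together with $H^2(Y,T_Y)=0$ (Serre-dual to $H^0(\Omega_Y^1(F))=0$), one obtains
\[H^2(Y,T_Y(-\log D))=\operatorname{coker}\big(H^1(Y,T_Y)\to H^1(N_{C_1})\oplus H^1(N_{C_2})\big),\]
whose target is $\mC^4$; the proposition is exactly the non-surjectivity of this restriction map, which is the transpose of $\partial$.

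The hard part will be the connecting-map computation itself, namely showing that $\partial$ annihilates the two-dimensional anti-diagonal, because one must promote the fibrewise third-kind differential to an honest global section. The subtlety is concentrated entirely over the singular fibres, where $\Omega^1_{Y/\mP^1}$ is not locally free and differs from the relative dualizing sheaf $\omega_{Y/\mP^1}=f^*\cO_{\mP^1}(3)$; this is precisely the phenomenon the snake-lemma diagram of Proposition \ref{pro-4.1} is designed to control, so I would import that bookkeeping rather than compute $f_*$ naively. Indeed, a naive relative-cotangent computation that ignores this torsion would wrongly predict nonvanishing already for a single section, so the singular-fibre analysis is not optional but is the crux of the argument.
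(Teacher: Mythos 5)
Your reduction is set up correctly and is equivalent to the paper's: writing the twisted residue sequence and using $H^0(Y,\Omega^1_Y(F))=0$, the statement becomes $\ker\partial\neq 0$, and this has the same content as the paper's connecting map $\delta\colon H^0(\cK)\to H^1(Y,\Omega^1_Y(\log(C_1+C_2+F)))$ having nontrivial kernel (the two agree because $H^0(\Omega^1_Y(\log(C_1+C_2+F)))=0$, by numerical independence of $C_1,C_2,F$). But your proposal stops exactly where the content of the proposition begins. You say yourself that ``the hard part will be the connecting-map computation itself,'' and you never carry it out: the only justification offered for $\ker\partial\neq 0$ is the fibrewise residue theorem, i.e.\ that admissible residue data form the anti-diagonal and that the fibrewise differential of the third kind realizing them ``is exactly the section being sought.'' That inference --- from fibrewise existence to the existence of a global twisted log form --- is precisely what must be proved, and it is not a formality: the fibrewise third-kind forms are unique only up to holomorphic forms on the fibres, and gluing them into a global section meets a genuine obstruction (in effect the infinitesimal invariant of the normal function attached to $C_1-C_2$), which is exactly what $\partial$ measures. ``Importing the bookkeeping'' of Proposition~\ref{pro-4.1} does not settle this, because there the diagram chase was used to prove an \emph{injectivity} statement, whereas here one needs a \emph{vanishing} statement for the connecting map --- the single assertion that constitutes the paper's proof.

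Two concrete symptoms show the gap is real. First, your heuristic proves too much: you claim $\partial$ kills the full two-dimensional anti-diagonal, i.e.\ $h^0(\Omega^1_Y(\log(C_1+C_2))(F))\ge 2$, whereas the paper's argument produces exactly a one-dimensional space --- only the line in $H^0(\cK)=H^0(\omega_F(p+q))$ lying over the kernel of $H^0(\mC_p\oplus\mC_q)\to H^1(F,\Omega^1_F(F))$ is asserted to die under $\delta$, while the complementary direction coming from $H^0(\Omega^1_F(F))$ survives by the same injectivity argument as in Proposition~\ref{pro-4.1}; so your conclusion is inconsistent with the paper's count. Second, the heuristic fails on a test case contained in this very paper: for an elliptic K3 surface (an $E(2)$, where $\omega_Y\cong\cO_Y$) with two disjoint sections, the identical fibrewise residue argument would predict $H^0(\Omega^1_Y(\log(C_1+C_2))\otimes\omega_Y)\neq 0$, yet this group vanishes, since $H^0(\Omega^1_Y)=0$ and $c_1(C_1),c_1(C_2)$ are independent in $H^1(\Omega^1_Y)$ --- exactly the argument of Proposition~\ref{prop 2.1}. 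Hence the dichotomy you invoke (one section forces zero, two sections give the anti-diagonal) is not governed by the residue theorem alone; it depends on the twist $\omega_Y=\cO_Y(F)$ and on global geometry, and a correct proof must actually establish the vanishing of the obstruction on (a line in) the anti-diagonal rather than assume it.
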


\begin{proof}
Consider the following commutative diagram as before.
\[\begin{array}{ccccccc}
& 0 & & 0& &  & \\
& \downarrow & & \downarrow & &  & \\
0\to & \Omega_Y^1 & \to & \Omega_Y^1(F) & \to & \Omega_Y^1(F)\otimes\cO_F &\to 0 \\
& \downarrow & & \downarrow & & \downarrow & \\
0\to & \Omega_Y^1(\log(C_1+C_2+F)) & \to & \Omega_Y^1(\log(C_1+C_2))(F) & \to & \cK &\to 0 \\
& \downarrow & & \downarrow & & \downarrow & \\
0\to\cO_F\to & \cO_{C_1}\oplus\cO_{C_2}\oplus\cO_F & \to &
\cO_{C_1}(F)\oplus\cO_{C_2}(F) & \to & \mC_p\oplus\mC_q &\to
0\\
& \downarrow & & \downarrow & & \downarrow & \\
& 0 & & 0& & 0 &
\end{array}\]
where $\cK$ is the cokernel of the map from
$\Omega_Y^1(\log(C_1+C_2+F))$ to $\Omega_Y^1(\log(C_1+C_2))(F)$
and $p=F\cap C_1$, $q=F\cap C_2$.

We have a short exact sequence
\[0\to \cO_F\to \Omega_Y^1(F)\otimes\cO_F \to \Omega_F^1(F)\to 0.\]
Then by the snake lemma, we have another short exact sequence
\[0\to \Omega_F(F)\to \cK\to \mC_p\oplus\mC_q\to 0,\]
and we get $\cK=\omega_F(p+q)$. Then the 1-dimensional subspace of
$H^0(F, \cK)$, induced by the kernel of the map from
$H^0(\mC_{p,q})$ to $H^1(F, \Omega_F^1(F))$, maps to 0 in $H^1(Y,
\Omega_Y^1(\log(C_1+C_2+F)))$. Therefore,
$H^0(\Omega_Y^1(\log(C_1+C_2))(F))$ is a 1-dimensional space.
\end{proof}

By Proposition \ref{pro-4.2},  we cannot obtain from an $E(3)$
surface a singular surface $X$ with $K_X$ big, if we impose the
vanishing of the obstruction space of $X$. Thus, to construct a
surface of general type with $p_g=2$ and $q=0$, one cannot impose
the vanishing of the obstruction space, and need to find a
singular surface with an automorphism such that the obstruction
space has trivial invariant part.

\begin{example} Construction of a surface of general type
with $p_g=2$ and $q=0$.
\end{example}  Let
$D_1, D_2$ be two smooth conics in $\mP^2$ such that $D_1$ and
$D_2$ meet transversally at four points $p_1, \ldots, p_4$. Let
$T$ be a smooth plane curve of degree 4 meeting $D_1, D_2$
transversally at four points $p_1, \ldots, p_4$. Let $V$ be a
$(\mZ/2\mZ)^2$-cover of $\mP^2$ branched over $D_1, D_2$, and $T$.
Then $V$ has four $\dfrac{1}{4}(1,1)$ singularities over $p_1,
\ldots, p_4$, and $p_g(V)=2$, $\chi(\cO_V)=3$, $K_V^2=4$. Its
minimal resolution is an $E(3)$ surface with an elliptic fibration
induced by the double cover of the pencil of conics $D_t$
generated by $D_1$ and $D_2$ branched over the four intersection
points of $D_t$ and $T$ away from the four points $p_1, \ldots,
p_4$. Then by the same argument as in \cite{Lee} one can construct
minimal surfaces of general type with $p_g=2$, $q=0$, and $1\le
K^2\le 4$.

\m

{\em Acknowledgements}. Yongnam Lee would like to thank Noboru
Nakayama for valuable discussions during the workshop in June,
2009 at RIMS in Kyoto University. Yongnam Lee was supported by the
WCU Grant funded by the Korean Government (R33-2008-000-10101-0),
and partially supported by the Special Research Grant of Sogang
University. JongHae Keum was supported by the National Research
Foundation of Korea funded by the Ministry of EST
(NRF-2007-C00002).


\begin{thebibliography}{99}

\bibitem{Bar} R. Barlow, {\it Some new surfaces with $p_g=0$}, Duke Math. J. 51 (1984), no. 4, 889--904.

\bibitem{BCP} I. Bauer, F. Catanese, and R. Pignatelli, {\it Surfaces of general type with geometric genus zero: a
survey}, math.AG arXiv:1004.2583.

\bibitem{BP} I. Bauer and R. Pignatelli, {\it The classification of minimal product-quotient surfaces with
$p_g=0$}, math.AG arXiv:1006.3209.

\bibitem{CS} D. Cartwright and T. Steger, {\it Enumeration of 50 fake projective planes},
C. R. Acad. Sci. Paris, Ser. I {\bf 348} (2010), 11-13.

\bibitem{CD} F. Cossec and I. Dolgachev, Enriques surfaces
I, Progress in Math., {\bf 76}, Birkh\"auser, 1989.

\bibitem{Don} S. K. Donaldson, {\it Irrationality and the h-cobordism conjecture}, J. Diff. Geom. {\bf 24} (1986), 275-341.

\bibitem{Free} M. Freedman, {\it The topology of four-manifolds}, J. Diff. Geom. {\bf
17} (1982), 357-453.

\bibitem{FQ} R. Friedman and Z. Qin, {\it On complex surfaces diffeomorphic to
rational surfaces}, Invent. Math. {\bf 120} (1995), 81-117.

\bibitem{Inoue} M. Inoue, {\it Some new surfaces of general type}, Tokyo J. Math.
{\bf 17} no. 2 (1994), 295--319.

\bibitem{K08} J. Keum, {\it Quotients of fake projective planes}, Geom. Topol. {\bf 12} (2008), 2497--2515.

\bibitem{Kodaira} K. Kodaira, {\it On compact analytic surfaces II}, Ann. of Math. (2)
{\bf 77} (1963), 563--626.

\bibitem{KM}  J. Koll\'ar and S. Mori, Birational geometry of algebraic
                  varieties,  Cambridge Tracts in Mathematics, {\bf 134}, 1998.

\bibitem{Kon} S. Kond\=o, {\it Enriques surfaces with finite automorphism groups}, Japan. J. Math.
{\bf 12} no. 2 (1986), 191--282.


\bibitem{Lee} Y. Lee, {\it Complex structure on the rational blowdown of sections in
E(4)}, Advanced Studies in Pure Mathematics, {\bf 60} (2010),
258-269.

\bibitem{LP1} Y. Lee and J. Park, {\it A simply connected surface of general
                 type with $p_g=0$ and $K^2=2$}, Invent. Math.
                  {\bf 170} (2007), 483--505.

\bibitem{LP2} Y. Lee and J. Park, {\it A complex surface of general type with $p_g=0, K^2=2$ and $H_1=\mZ/2\mZ$},
                 Math. Res. Lett. {\bf 16} (2009), 323--330.

\bibitem{LP3} Y. Lee and J. Park, {\it A construction of Horikawa surface via
                Q-Gorenstein smoothings}, to appear in Math. Z.

\bibitem{Ozsvath-Szabo} P. Ozsv\'{a}th and Z. Szab\'{o}, {\it On Park's exotic smooth four-manifolds},
Geometry and Topology of Manifolds, Fields Institute Communications \textbf{47} (2005), 253--260, Amer. Math. Soc., Province, RI.


\bibitem{PPS1} H. Park, J. Park, D. Shin, {\it A simply connected surface of general type with $p_g=0$ and $K^2=3$},
              Geom. Topol. {\bf 13} (2009), 743--767.

\bibitem{PPS2} H. Park, J. Park, D. Shin, {\it A complex surface of general type with $p_g=0, K^2=3$ and
               $H_1=Z/2Z$}, arXiv:0803.1322.

\bibitem{PPS3} H. Park, J. Park, D. Shin, {\it A simply connected surface of general type with
               $p_g=0$ and $K^2=4$},  Geom. Topol. {\bf 13}  (2009), 1483--1494.

\bibitem{PPS4} H. Park, J. Park, D. Shin, {\it A construction of surfaces of general type with $p_g=1$ and
             $q=0$}, arXiv:0906.5195.

\bibitem{Roh} V. A. Rohlin, {\it New results in the theory of four-dimensional
manifolds}, Doklady Akad. Nauk. SSSR (N.S.) {\bf 84} (1952),
221-224.

\end{thebibliography}
\end{document}